\theoremstyle{plain}
\newtheorem{theorem}{Theorem}[section]
\newtheorem{prop}[theorem]{Proposition}
\newtheorem{lemma}[theorem]{Lemma}
\newtheorem{corollary}[theorem]{Corollary}
\newtheorem{definition}[theorem]{Definition}
\theoremstyle{remark}
\newtheorem{remark}[theorem]{Remark}
\newtheorem{example}[theorem]{Example}
\numberwithin{equation}{section}
\def\@setcopyright{}
\def\serieslogo@{}
\begin{document}


\author{Tobias Pecher}
\address{Emmy-Noether-Zentrum, Department Mathematik}
\address{Universit\"at Erlangen-N\"urnberg, Bismarckstrasse 1 $\!\!\frac{1}{2}$, 91054 Erlangen, Germany}
\email{pecher@mi.uni-erlangen.de}

\title[Multiplicity-free Super vector spaces]{Multiplicity-free Super vector spaces}

\begin{abstract}
Let $V$ be a complex finite dimensional super vector space with an action of a connected semisimple group $G$. We classify those pairs $(G,V)$ for which all homogeneous components of the super symmetric algebra of $V$ decompose multiplicity-free.
\end{abstract}




\date{\today}

\maketitle

\section{Introduction} \label{sec:intro}


One of the main questions of invariant theory is, given a Lie group $G$ with a representation $V$, the determination of a set of generators of the invariant algebra $\CC[V]^G$ as well as finding all relations among them. Solutions to these questions are usually called a First and Second Fundamental Theorem of invariant theory of the underlying representation.
In the most common examples, $G$ is a classical group and $V$ a number of copies of the defining representation (and maybe some copies of its dual representation). An important role in classical invariant theory was played by the famous identity of Capelli~\cite{Cap}

\begin{equation} \label{eqn:capelli}
\det[\Delta_{ij} + \delta_{ij}(n - j)] = \det(x_{ij}) \cdot \Omega.
\end{equation}
It describes a relation between the so-called ``polarization operators'' $\Delta_{ij}$ on the variables $x_{ij}$ and the $\Omega$-operator which was introduced by A. Cayley~\cite{Cay} and had also applications to invariant theoretical problems. 
Some applications of~\eqref{eqn:capelli}, such as a FFT for $G = \OG_m$, can be found in H. Weyl's book~\cite{We}.
\bigskip

In~\cite{HoX, Ho1, Ho2}, R. Howe developed invariant theory on the basis of multiplicity-free actions. More precisely, he could reprove well-known instances of FFT's and SFT's by using certain multiplicity-free actions, i.e. representation spaces $V$ for which $\CC[V]$ is multiplicity-free. Moreover, he showed that for a multiplicity-free action $(G,V)$, the algebra of $G$-invariant differential operators $\PD(V)^G$ is a polynomial algebra and that such pair naturally gives to an identity as~\eqref{eqn:capelli}. Namely, given the canonical homomorphism

\begin{equation} \label{eqn:}
\mathcal{Z}(\Lg) \longrightarrow \mathbb{PD}(V)^G
\end{equation}
from the center of the enveloping algebra of the Lie algebra $\mathfrak{g}$ of $G$ into the $G$-invariant differential operators, one could ask for a preimage of such operator under this homomorphism. The classical example~\eqref{eqn:capelli} comes from the action of $\GL_n\times\GL_n$ on the space of $n \times n$-matrices~\cite{Ho2}. \bigskip

In~\cite{Ho1} one also finds FFT's and SFT's for exterior invariant algebras $(\bigwedge V)^G$. It turns out that this 
``skew invariant theory'' is, in some sense, very similar to its symmetric counterpart and some examples can be explained via multiplicity-free exterior algebras (or, as called by Howe, {\it skew multiplicity-free modules}).

In this article, we study study the super symmetric setting and therefore combine the two types of multiplicity-free actions above. Hence, let $V$ be a (complex) {\it super vector space}, i.e. a finite dimensional complex vector space together with a fixed direct sum decomposition (or a $\ZZ_2$ grading) $V = V_0 \boxplus V_1$ of subspaces $V_0$ and $V_1$. They are referred to as the {\it even} and {\it odd} part of $V$. If $V_1 = 0$ (resp. $V_0 = 0$) we call $V$ {\it purely even} (resp. {\it purely odd}). In contrast, by a {\it proper super space} we mean that $V_0$ and $V_1$ are nonzero. A representation of $G$ on a super vector space is given by the direct sum of representations on $V_0$ and $V_1$. \bigskip

Our main object of interest will be the supersymmetric algebra $P(V)$ on $V$. This can be defined as

\begin{equation} \label{eqn:supersymm-algebra}
P(V) = S(V_0) \otimes \bigwedge(V_1)
\end{equation}
the symmetric algebra on the even part of $V$ tensored with the exterior algebra on the odd part. We assume that $V$ is a module for a connected semisimple group $G$. Since both parts of a generic $V$ can be reducible under $G$, we use the $\boxplus$ notation to indicate the splitting. Thus, let

\begin{equation} \label{eqn:grading}
V_0 = U_1 \oplus \dots \oplus U_k,~ V_1 = W_1 \oplus \dots \oplus W_l
\end{equation}
be the decomposition of even and odd part into irreducible submodules of $G$. By basic multilinear algebra, the super symmetric algebra $P(V) = S(V_0)\otimes \bigwedge(V_1)$ decomposes as a direct sum into $G$ stable subspaces

\begin{equation} \label{eqn:multilin}
P(V) = \bigoplus_{({\bf i}, {\bf j})} S^{i_1} U_1 \otimes \dots \otimes S^{i_k} U_k \otimes \bigwedge^{j_1} W_1 \otimes \dots\otimes \bigwedge^{j_l} W_l.
\end{equation}

Let us denote the direct summands on the right hand side of~\eqref{eqn:multilin} by $P^{({\bf i}, {\bf j})}(V)$. 

\begin{definition} \label{def:superMF}
The pair $(G,V)$ is called super multiplicity-free (or super MF), if 

\[ \dim\Hom_G\left(P^{({\bf i}, {\bf j})}(V), \Gamma \right) \leq 1 \]

for all appropriate multiindices $({\bf i}, {\bf j})$ and all irreducible representations $\Gamma$ of $G$.
\end{definition}

Assume that $(G,V)$ is super MF according to Definition~\ref{def:superMF} with $V$ decomposing as in~\eqref{eqn:grading}. Consider the bigger group $\tilde{G} = (\CC^*)^{k+l} \times G$ and extend the action on $V$ as follows: Let the $i$th copy of $\CC^*$ act trivially on all irreducible submodules except on $U_i$ (resp. $V_{i -k}$ if $k < i \leq k + l$) where it should act by multiplication with scalars. (We call this particular action a {\it saturated action}.) Then it can be stated that $(G,V)$ is super MF (according to Definition~\ref{def:superMF}) if and only if the super symmetric algebra $P(V)$ is multiplicity-free as a $\tilde{G}$ module, i.e.

\begin{equation} \label{eqn:MF}
P(V) = \bigoplus_{\lambda \in \Xi} V(\lambda)~{\rm with}~ V(\lambda) \neq V(\mu) ~{\rm if}~ \lambda \neq \mu.
\end{equation}

The problem is that $P(V)$ might or might not be multiplicity-free under a smaller group $S \times G$, where $S$ is a subgroup of the $(\CC^*)^{k+l}$ torus. Since this is not easy to deal with, we shall classify only those representation that satisfy Definition~\ref{def:superMF} or, equivalently, multiplicity-free super symmetric algebras for saturated actions. \bigskip

At this point we shall make a further remark about our interest in super MF actions. First, let $V$ be a super vector space. Write $\tilde{V} = V \oplus V^*$ and consider the canonical super symmetric pairing of $V$ and $V^*$ given by

\begin{equation} \label{eqn:canoncial_pairing}
\left< v + \lambda, w + \mu \right> := \lambda(w) - (-1)^{|v||w|} \mu(v), 
\end{equation}

where $v, w \in V$ and $\lambda, \mu \in V^*$ are homogeneous with respect to the $\ZZ_2$ grading on $V$, resp. $V^*$. In the free associative algebra over $V$ with $1$, define the super commutator of two elements $x,y \in V$ by

\begin{equation} \label{eqn:super-commutator}
[ x,y ] := x\cdot y - (-1)^{|x||y|} \cdot y\cdot x.
\end{equation}

Then, we can consider the quantized algebra $P(\tilde{V})_{\left< \cdot,\cdot\right>}$ subject to the relations

\begin{equation}
[x,y] =  \left<x,y \right> \cdot 1
\end{equation}
for all $x,y \in V$. This algebra 
can be realized as a endomorphism algebra of $P(V)$. Furthermore, given an action of any {\it reductive} group $G$ on $V$, the following is known~\cite[Theorem 3]{Ho2}:

\begin{prop} \label{prop:quantum-iso}

There is a natural $G$-equivariant isomorphism $\varphi: P(\tilde{V})_{\left< \cdot,\cdot\right>} \rightarrow \PD(V)$. \hfill $\Box$ \end{prop}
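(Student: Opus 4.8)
The plan is to construct $\varphi$ explicitly on generators and then to prove bijectivity by a filtration argument. First I would realize $\PD(V)$ concretely as the algebra of super differential operators acting on the super symmetric algebra of functions on $V$, namely $P(V^*) = S(V_0^*)\otimes\bigwedge(V_1^*)$. Inside this operator algebra one has two distinguished families of generators: each $\lambda\in V^*$ acts by left multiplication $m_\lambda$, and each $v\in V$ acts by the associated super-derivation (directional derivative on the even part, contraction on the odd part), which I denote $\partial_v$. I would then \emph{define} $\varphi$ on the generating space $\tilde V = V\oplus V^*$ by $\varphi(\lambda)=m_\lambda$ for $\lambda\in V^*$ and $\varphi(v)=\partial_v$ for $v\in V$, and extend to an algebra homomorphism out of the free associative algebra on $\tilde V$.

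Next I would verify that this homomorphism factors through the quantized algebra $P(\tilde V)_{\langle\cdot,\cdot\rangle}$, i.e. that the images satisfy $[\varphi(x),\varphi(y)]=\langle x,y\rangle\cdot\mathrm{id}$ for all homogeneous $x,y\in\tilde V$. This is a direct computation in three cases. For $x,y\in V^*$ the multiplication operators super-commute, matching $\langle x,y\rangle=0$ (the anticommuting signs on the exterior factor are exactly those built into the super commutator~\eqref{eqn:super-commutator}); for $x,y\in V$ the super-derivations super-commute, again matching $\langle x,y\rangle=0$; and for $x=v\in V$, $y=\mu\in V^*$ the super-Leibniz rule gives $[\partial_v,m_\mu]=\mu(v)\cdot\mathrm{id}$, which coincides with $\langle v,\mu\rangle$ up to the sign fixed in~\eqref{eqn:canoncial_pairing}. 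Since these are precisely the defining relations, $\varphi$ descends to a homomorphism $P(\tilde V)_{\langle\cdot,\cdot\rangle}\to\PD(V)$. Equivariance is then automatic: $G$ acts by algebra automorphisms on both sides, the pairing $\langle\cdot,\cdot\rangle$ is $G$-invariant by construction, and $\varphi$ is manifestly $G$-equivariant on the generating space $\tilde V$, hence on the whole algebra.

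For surjectivity I would observe that the multiplication operators $m_\lambda$ and the derivations $\partial_v$ already generate all of $\PD(V)$: every polynomial coefficient differential operator is a sum of products of coordinate multiplications and partial (or contraction) operators. Injectivity is the heart of the matter. I would equip $P(\tilde V)_{\langle\cdot,\cdot\rangle}$ with the filtration by total degree in the generators and $\PD(V)$ with the order filtration (symbol filtration). Because each defining relation $[x,y]=\langle x,y\rangle\cdot 1$ has a degree-two left-hand side and a degree-zero right-hand side, the associated graded of the quantized algebra is the undeformed super symmetric algebra $P(\tilde V)=P(V)\otimes P(V^*)$; on the other side the symbol map identifies $\mathrm{gr}\,\PD(V)$ with the same algebra $P(\tilde V)$ of symbols. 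The homomorphism $\varphi$ respects both filtrations and induces the identity on associated graded, so it is an isomorphism on $\mathrm{gr}$ and therefore an isomorphism.

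The main obstacle I expect is the injectivity step, which secretly requires a super Poincar\'e--Birkhoff--Witt theorem: one must know that the quantized algebra $P(\tilde V)_{\langle\cdot,\cdot\rangle}$ is not smaller than $P(\tilde V)$, i.e. that no unexpected relations collapse its associated graded. Concretely, I would establish that ordered monomials in a homogeneous basis of $\tilde V$ form a basis of $P(\tilde V)_{\langle\cdot,\cdot\rangle}$, either by exhibiting the faithful operator representation on $P(V^*)$ directly (which simultaneously proves the PBW basis and the nondegeneracy of $\varphi$) or by invoking the Clifford--Weyl PBW theorem for the super-symmetric bilinear form $\langle\cdot,\cdot\rangle$. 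Once the dimension of each filtered piece is pinned down this way, the comparison of associated graded algebras closes the argument.
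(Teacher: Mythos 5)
You should first be aware that the paper contains no proof of this proposition at all: the statement carries a terminal $\Box$ because it is quoted verbatim from Howe~\cite[Theorem 3]{Ho2}. So your write-up is not an alternative to an argument in the paper; it reconstructs the argument the paper outsources. That said, what you propose is indeed the standard proof of the quoted result, and its skeleton is sound: define $\varphi$ on the generating space $\tilde V$, check the three commutator relations so that $\varphi$ descends from the free algebra to $P(\tilde V)_{\langle\cdot,\cdot\rangle}$, get surjectivity from the fact that multiplications and super-derivations generate $\PD(V)$, and get injectivity from a filtered/graded comparison. You also correctly isolate the genuine content of the injectivity step --- that $\mathrm{gr}\,P(\tilde V)_{\langle\cdot,\cdot\rangle}$ is a priori only a \emph{quotient} of $P(\tilde V)$, so one needs a super PBW statement --- and you resolve it the right way: the faithful operator representation simultaneously forces $\mathrm{gr}\,P(\tilde V)_{\langle\cdot,\cdot\rangle}=P(\tilde V)$ and the injectivity of $\varphi$, since the composite $P(\tilde V)\twoheadrightarrow\mathrm{gr}\,P(\tilde V)_{\langle\cdot,\cdot\rangle}\to\mathrm{gr}\,\PD(V)=P(\tilde V)$ is the identity on generators.

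One point must be fixed rather than waved at: the signs. With the paper's pairing~\eqref{eqn:canoncial_pairing}, your assignment $\varphi(\lambda)=m_\lambda$, $\varphi(v)=\partial_v$ acting on $P(V^*)$ does \emph{not} satisfy the defining relations on the even part: there $[\partial_v,m_\mu]=\mu(v)\cdot\mathrm{id}$, whereas $\langle v,\mu\rangle=-\mu(v)$ for even $v,\mu$; only on the odd part do the two signs agree. So either twist the even generators, setting $\varphi(v)=-\partial_v$ for $v\in V_0$ (this does not disturb the relations among the $\partial$'s or among the $m$'s), or --- cleaner, and implicitly what the paper does when it says the quantized algebra is realized as an endomorphism algebra of $P(V)$ rather than of $P(V^*)$ --- let $v\in V$ act by multiplication on $P(V)$ and $\mu\in V^*$ by the super-derivation extending $w\mapsto\mu(w)$; then $[\varphi(\mu),\varphi(v)]=\mu(v)\cdot\mathrm{id}=\langle\mu,v\rangle\cdot\mathrm{id}$ and $[\varphi(v),\varphi(\mu)]=\langle v,\mu\rangle\cdot\mathrm{id}$ hold in both parities with no adjustment. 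Finally, your ``equivariance is automatic'' rests on the one-line check that $\langle\cdot,\cdot\rangle$ is $G$-invariant because the actions of $G$ on $V$ and $V^*$ are dual to each other; say this explicitly, since it is exactly what makes the ideal of relations $G$-stable.
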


\begin{remark} \label{rem:harmonic}
Now assume that $P(V)$ is multiplicity-free under $G$ as in~\eqref{eqn:MF} (althoguh here, $G$ is not necessarily of the form). Then, by Proposition~\ref{prop:quantum-iso}, the subalgebra of $\PD(V)$ of $G$-invariant differential operators has the form

\[ \PD(V)^G = \bigoplus_{\lambda \in \Xi} (V(\lambda)\otimes V(\lambda)^*)^G. \]

By Schur's Lemma we find a canonical basis $D_\lambda$ for these operators. Applying this lemma yet another time, we find by the $G$ invariance and by~\eqref{eqn:MF}, each $D_\lambda$ maps on each irreducible subspace $V(\mu)$ as a scalar $c_\lambda(\mu)$. It is desirable to determine both the basis and the spectral values of these elements for every super MF space $(G,V)$. 
\end{remark}

This paper is organized follows: In Section~\ref{sec:prelim} we introduce the notion of the representation diagram of a module $V$ for a semisimple group $G$ and state the classification of purely even and odd super MF spaces. Moreover, we recall decomposition formulas for symmetric and exterior algebras of some particular representations of linear groups. They will be used in some calculations in the preceeding sections.

As our main result, the classification of saturated super multiplicity-free actions (Theorem~\ref{thm:proper-super}) is given in Section~\ref{sec:results}. Beside this we show two interesting properties of super MF spaces: First, we prove in Proposition~\ref{prop:subgraph} that for a given super MF representation $(G,V)$ every smaller representation (which is given by a subgraph of the representation diagram of $(G,V)$) is also super MF. This fact reduces the number of necessary calculations tremendously. But also the second result is labor-saving: It characterizes a super MF space $(G,V)$ by the fact that its algebra of invariant differential operators is abelian (Corollary~\ref{cor:MF-abelian}). With this, we deduce that the property of a module $V$ to be super MF does not depend upon possible exchanges of some submodules by their duals (Corollary~\ref{cor:dual-statements}).

The rest of this article is concerned with establishing Theorem~\ref{thm:proper-super}: In Section~\ref{sec:decomp}
we show that all modules that are listed in this theorem (except for some modules of simple groups) are indeed super MF.
The remaining ones are treated in Section~\ref{sec:simple}, which is mainly devoted to the completeness of part a) of Theorem~\ref{thm:proper-super}. Here, we make use the decomposition formulas that are stated in Section~\ref{sec:prelim}. (However, in some preliminary calculations we also used the computer algebra packages LiE~\cite{LCL} and Schur~\cite{Wy}.) In contrast, the considerations in Section~\ref{sec:two} and Section~\ref{sec:three} that deal with the completeness of part b) and c) of Theorem~\ref{thm:proper-super} mostly rely on the statement on subdiagrams in Proposition~\ref{prop:subgraph}. \bigskip

{\it Acknowledgements:} This work is part of the author's PhD research conducted at University of Erlangen-N\"urnberg under the supervision of Friedrich Knop. The author would like to thank him for his support and suggestions. 





\section{Preliminaries} \label{sec:prelim}

In the following, we will classify all pairs $(G,V)$ satisfying condition Definition~\ref{def:superMF}. The two extremal cases of purely even and odd super MF spaces have been investigated already. The even case $V = V_0$ has been treated by ~\cite{Ka}, \cite{BR1}, \cite{Lea}, while the odd case $V = V_1$ was covered by \cite{Ho1} and \cite{Pe}. Hence, we are left with the problem to find all instances of proper super spaces that are super MF. By definition of the super symmetric algebra \[ P(V) = S(V_0) \otimes \bigwedge(V_1) \]
it is immediate that both, even and odd part of $V$ have to be super MF according to Definition~\ref{def:superMF}. Therefore, we recall the clasification results for these two special cases. But before, we introduce the notion a representation diagram. This is a convenient tool for visualizing the action of $G$ on $V$. \bigskip

Let $G = G_1 \times \dots \times G_h$ a group with $h$ simple factors and $V = U_1 \oplus \dots \oplus U_k \boxplus W_1 \oplus \dots \oplus W_l$ be a decomposition into irreducible subspaces. Draw three horizontal lines of dots, one upon the other, where the top row consists of $k$ dots labelled by the $U_p$, the row in the middle consists of $h$ dots,labelled by the $G_r$ and the bottom row consists of $l$ dots labelled by the $W_q$. These $p + q + r$ dots form the vertices of the graph. For each pair of a simple group factor $G_r$ and an irreducible submodule we draw an edge between the corresponding vertices if and only if $G_r$ acts nontrivially on the submodule. 

\begin{definition} \label{def:rep-graph}
The graph $\mathcal{G} = \mathcal{G}(G,V)$ obtained by the above procedure is called the representation graph (or representation diagram) of $(G,V)$.
\end{definition}

\begin{example} \label{ex:graph}
 The representation graph of the action of $\SL_n \times \SL_2 \times \SO_{2m+1}$ on $\CC^n\otimes\CC^2 \boxplus \CC^2 \otimes \CC^{2m+1}$ is given by

\[ \superN{\SL_n}{\SL_2}{\SO_{2m+1}}{}{}. \]
\end{example}

Now we can state the classification of super MF spaces for $V = V_0$ and $V = V_1$.

\begin{theorem}[Symmetric case] \label{thm:symmetric} 
Let $(G,V)$ be an indecomposable representation with $V$ being a purely even super space. 

{\upshape a)} If $V$ is irreducible, then all instances of super MF spaces are given by the following list:

\[\begin{array}{|l | l | l | l | l |} \hline
\SL_n  ~ (n\geq 2)          & \Sp_{2n} ~ (n \geq 2)   & \Delta_7    & G_2  & \SL_n\otimes\SL_m  ~ (n,m \geq 2) \\
S^2\SL_n ~ (n\geq 2)        & \SO_{2n+1} ~ (n \geq 2) & \Delta_9    & E_7  & \SL_k\otimes\Sp_{2n} ~ (k=2,3)    \\
\bigwedge^2\SL_n~(n \geq 4) & \SO_{2n} ~ (n \geq 4)   & \Delta_{10} &      & \SL_n\otimes\Sp_4 ~ (n \geq 2)    \\ \hline
\end{array}\]

{\upshape b)} If $V$ is reducible but indecomposable, then all instances of super MF spaces are given by:

$\symmV{\SL_n}{}{}$,                \hfill $\symmV{\SL_n}{}{\bigwedge^2\CC^n}\,\,\,\,\,\,$, \hfill 
$\symmV{\Sp_{2n}}{}{}$,             \hfill $\symmV{\Spin_8}{}{\Delta_8^+}$,                 \hfill
$\symmN{\SL_n}{\SL_m}{}{}\!\!\!\!$,    

$\symmN{\SL_2}{\Sp_{2m}}{}{}$, \hfill 
$\symmM{\SL_n}{\SL_2}{\SL_m}{}{}$, \hfill
$\symmM{\SL_n}{\SL_2}{\Sp_{2m}}{}{}$, \hfill
$\symmM{\Sp_{2n}}{\SL_2}{\Sp_{2m}}{}{}$\!\!\!\!\!.
\end{theorem}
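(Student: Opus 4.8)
The plan is to reduce the claim to the classical classification of multiplicity-free representations and then to recover that list in the present language. Since $V$ is purely even we have $P(V)=S(V_0)$, so the super multiplicity-freeness of Definition~\ref{def:superMF} is precisely ordinary multiplicity-freeness of the symmetric algebra $S(V_0)$ under the saturated group $\tilde G=(\CC^*)^k\times G$. By the theorem of Vinberg and Kimel'feld this is equivalent to $V_0$ being a \emph{spherical} $\tilde G$-module, i.e.\ to a Borel subgroup $\tilde B\subseteq\tilde G$ possessing a dense orbit on $V_0$. In this way the statement coincides with the classification of spherical (equivalently, multiplicity-free) modules carried out in \cite{Ka}, \cite{BR1} and \cite{Lea}.

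First I would assemble the candidates. A dense $\tilde B$-orbit forces a dense $\tilde G$-orbit, so every multiplicity-free module is in particular a prehomogeneous vector space for $\tilde G$; I would accordingly start from the Sato--Kimura classification of irreducible prehomogeneous vector spaces and, after reducing modulo castling transformations, retain only those whose structure group is a product of a semisimple group and a torus. Sphericity additionally yields the dimension bound $\dim V_0\le\dim\tilde B=k+\tfrac12(\dim G+\operatorname{rank} G)$, which already discards most entries. The finitely many survivors give the irreducible candidates of part a); allowing in addition connected configurations of two constituents joined through a common simple factor produces the reducible but indecomposable candidates of part b).

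Next I would prove that each listed module is genuinely multiplicity-free. For the classical families this is routine once the dense-Borel-orbit condition is verified directly, or else by expanding $S^d(V_0)$ for every $d$ with the decomposition formulas recalled below --- the Cauchy formula for $\SL_n\otimes\SL_m$, Littlewood's rules for symmetric powers of $S^2$ and $\bigwedge^2$, and the branching formulas for $\SO$ and $\Sp$. The spin and exceptional entries $\Delta_7,\Delta_9,\Delta_{10},G_2,E_7$ are confirmed by computing the dimension of a generic $\tilde B$-stabilizer, which for a dense orbit must equal $\dim\tilde B-\dim V_0$, together with the lowest homogeneous components. Part b) is handled analogously, the multiplicity-free decomposition following from an iterated Cauchy--Littlewood expansion along the representation diagram.

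The hard part will be \emph{completeness}, namely the exclusion of every module not on the list. Verifying that a given representation is multiplicity-free is comparatively easy, whereas eliminating all remaining prehomogeneous candidates requires a case-by-case analysis: for each one must exhibit either a violation of the dimension bound or an explicit homogeneous component $S^d(V_0)$ with a repeated irreducible summand, the latter being detected, for instance, through an excess in the dimension of the generic $\tilde B$-stabilizer that rules out a dense orbit. Organizing this elimination so that the classical series, the two-factor products, and the exceptional and spin representations are each dispatched by a uniform criterion is the genuinely laborious and delicate step.
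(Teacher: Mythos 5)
Your proposal is correct and takes essentially the same route as the paper: for purely even $V$ one has $P(V)=S(V_0)$, so the super MF condition is ordinary multiplicity-freeness of $S(V_0)$ under the saturated group, and the paper, exactly as in your first paragraph, settles the theorem by appealing to the known classifications (\cite[Theorem 3]{Ka} for the irreducible case, \cite[Theorem 2.5]{Lea} or \cite[Theorem 2]{BR1} for the reducible one). Your additional sketch of how those classifications are themselves established (Vinberg--Kimel'feld sphericity, Sato--Kimura prehomogeneous candidates, dimension bounds, case-by-case elimination) goes beyond what the paper does --- its proof is pure citation --- but is consistent with the methods of the cited literature.
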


\begin{proof}
See~\cite[Theorem 3]{Ka} for the irreducible case and~\cite[Theorem 2.5]{Lea} or~\cite[Theorem 2]{BR1} for the reducible.  
\end{proof}

\begin{theorem}[Skew-symmetric case] \label{thm:skew-symmetric}

Let $(G,V)$ be an indecomposable representation with $V$ being a purely odd super space.

{\upshape a)}  If $V$ is irreducible and $G$ simple, then all instances of super MF spaces are given by the following list:

\[ \begin{array}{|l|l|l|} \hline
\SL_n~(n \geq 2)           & \SO_{2n+1}~(n\geq 2) & \Sp_{2n}~(n\geq 3) \\
S^2\SL_n~(n\geq 2)         & \Delta_7             & \bigwedge^2_0\Sp_4 \\
S^k\SL_2~(k=3,\dots,6)     & \Delta_9             & \bigwedge^3_0\Sp_6 \\
S^3\SL_3                   & \SO_{2n} ~(n \geq 3) & G_2                \\
\bigwedge^2\SL_n~(n\geq 4) & \Delta_{10}^+        & E_6                \\
\bigwedge^3\SL_6           & \Delta_{12}^+        & E_7                \\ \hline
\end{array} \]

{\upshape b)} If $V$ is as in a), but $G$ not necessarily simple then:

$\SL_n\otimes \SL_m$, $\SL_n\otimes \Sp_4$, $\SL_k\otimes \SO_{2m+1}~(k=2,3)$ or $\SL_2\otimes \SO_{2m}$. \bigskip

{\upshape c)} If $V$ is reducible but indecomposable, then all instances of super MF spaces are given by:

$\skewA{\SL_n}{}{}$,                 \hfill
$\skewA{\SL_k}{}{\!\!\!\!\!\!\!S^2\CC^k}$, \hfill
$\skewA{\SL_2}{}{\!\!\!\!S^l\CC^2}$,  \hfill
$\skewA{\SO_{2n+1}}{}{}$,  \hfill
$\skewN{\SL_n}{\SL_m}{}{}\!\!\!\!$,  \hfill    
$\skewN{\SL_2}{\SL_n}{\!\!\!S^2\CC^2}{}\!\!\!\!$,  \hfill

$\skewN{\!\!\SL_2}{\!\!\SO_{2n+1}}{}{}$\!\!, 
$\skewN{\!\!\SL_2}{\!\!\SO_{2n+1}}{\!\!\!S^2\CC^2}{}$,  \hfill
$\skewW{\SL_n}{\!\!\!\SL_2}{\SL_m}{}{}$\!\!\!\!,  \hfill
$\skewW{\SL_n}{\!\!\!\!\SL_2}{\!\!\!\!\SO_{2m\!+1}}{}{}$\!,  \hfill
$\skewW{\!\!\!\SO_{2n\!+1}}{\!\!\!\SL_2}{\!\!\!\!\SO_{2m\!+1}}{}{}$.
\end{theorem}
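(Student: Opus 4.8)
The plan is to treat the two directions of the classification separately: first that every pair in the list is super MF (\emph{sufficiency}), and then that there are no others (\emph{completeness}). Throughout I would lean on three structural reductions announced in the introduction. The heredity property (Proposition~\ref{prop:subgraph}): if $(G,V)$ is super MF, then so is any pair whose representation diagram is a subdiagram of $\mathcal{G}(G,V)$; this lets me pass freely between a candidate and all of its simpler pieces. The commutativity criterion (Corollary~\ref{cor:MF-abelian}): $(G,V)$ is super MF if and only if the algebra $\mathbb{PD}(V)^G$ of invariant differential operators is abelian, which via Proposition~\ref{prop:quantum-iso} recasts the question as the super-commutativity of the $G$-invariants in the Clifford-type algebra $P(\tilde V)_{\langle\cdot,\cdot\rangle}$ attached to $\tilde V=V\oplus V^{*}$. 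Finally, the dual-invariance (Corollary~\ref{cor:dual-statements}) allows me to replace any irreducible summand by its dual, so that each duality class of diagrams need only be checked once.

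For sufficiency I would verify each entry by exhibiting an explicit multiplicity-free decomposition of every multigraded component $P^{(\mathbf i,\mathbf j)}(V)$, using the exterior-power formulas recalled in Section~\ref{sec:prelim}. The cleanest cases are the tensor-product modules of part b): for $V=\CC^{n}\otimes\CC^{m}$ the skew $(\GL_{n},\GL_{m})$-duality gives
\[
  \bigwedge^{d}\bigl(\CC^{n}\otimes\CC^{m}\bigr)=\bigoplus_{\lambda} S^{\lambda}\CC^{n}\otimes S^{\lambda'}\CC^{m},
\]
the sum running over partitions $\lambda$ of $d$ fitting into an $n\times m$ rectangle, which is manifestly multiplicity-free; analogous Littlewood-type branching identities dispose of $\SL_{n}\otimes\Sp_{4}$, $\SL_{k}\otimes\SO_{2m+1}$ and $\SL_{2}\otimes\SO_{2m}$. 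The simple classical entries of part a) follow from the known decompositions of $\bigwedge^{\bullet}$ of the defining representation and of $S^{2}$ and $\bigwedge^{2}$, while the spin, exceptional and small $\SL_{2}$-modules are finite in number and can be settled by direct computation (for which LiE and Schur are convenient). The reducible indecomposable diagrams of part c) are then handled by decomposing each bigraded piece, typically by iterating one Cauchy or branching formula per edge of the diagram.

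The main obstacle is completeness, i.e.\ showing that nothing outside the list survives. Here I would argue by contradiction: assuming $(G,V)$ super MF, heredity forces every simple factor, acting on each irreducible summand, to occur already in part a) of Theorem~\ref{thm:symmetric} or of the present list, which sharply limits the admissible vertices and edges of $\mathcal{G}(G,V)$. For the irreducible simple case this is the delicate point: one needs a growth argument showing that, as the highest weight (equivalently $\dim V$) exceeds the listed bound for each root system, some exterior power $\bigwedge^{j}V$ must contain a repeated constituent — most efficiently by tracking a single well-chosen weight multiplicity, or by producing two distinct Littlewood--Richardson fillings with the same highest weight. Combined with a finite low-rank and exceptional check, this pins down part a). Parts b) and c) then reduce, via heredity and the subdiagram obstruction, to ruling out each way of enlarging a listed diagram by one vertex or one edge, each exclusion being witnessed by an explicit double constituent in a low-degree component. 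I expect the asymptotic necessity argument for the irreducible simple families, together with the bookkeeping of the spin and exceptional cases, to be by far the most laborious part.
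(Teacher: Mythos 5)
The first thing to compare against is easy to state: the paper does not prove this theorem at all. Theorem~\ref{thm:skew-symmetric} is imported from the literature --- part a) is quoted from~\cite[Theorem 4.7.1]{Ho1}, parts b) and c) from~\cite[Theorem 4.8]{Pe} --- and serves only as an input to the classification of proper super spaces. So your sketch is by necessity a ``different route''; what it actually does is reconstruct, in outline, the architecture of the cited proofs: explicit multiplicity-free decompositions (skew $(\GL_n,\GL_m)$-duality, the nested-hook formulas, finite checks for the spin and exceptional modules) for sufficiency, and heredity plus explicit repeated constituents in low degree for the completeness of the reducible and several-factor cases, which is exactly how~\cite{Pe} (and Sections~\ref{sec:two}--\ref{sec:three} of this paper, in the proper super setting) proceed. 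Your use of Proposition~\ref{prop:subgraph}, Corollary~\ref{cor:MF-abelian} and Corollary~\ref{cor:dual-statements} is legitimate and not circular, since the subgraph property is~\cite[Proposition 4.7]{Pe}, proved there independently of the classification. Three smaller inaccuracies: for purely odd $V$ every irreducible constituent is odd, so invoking Theorem~\ref{thm:symmetric}a) in the completeness step is out of place; Corollary~\ref{cor:MF-abelian} concerns $\PD(V)^{\tilde G}$ for the saturated action, not $\PD(V)^G$; and the claim that ``Littlewood-type branching identities dispose of'' $\SL_n\otimes\Sp_4$, $\SL_k\otimes\SO_{2m+1}$, $\SL_2\otimes\SO_{2m}$ hides the real issue, namely that restriction to $\Sp$ or $\SO$ involves modification rules which can a priori create coincidences --- compare the careful analysis in Lemma~\ref{lemm:proof_1} of this paper.

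The genuine gap is the completeness of part a), and it is not a detail: for an irreducible module of a simple group the representation diagram has no proper subdiagrams, so heredity gives nothing, and essentially the whole content of part a) is the ``growth argument'' that you name and then defer. Naming it does not supply it. What is needed is a uniform mechanism valid for every root system and every highest weight outside the list, and the scalable one is a counting bound rather than a hunt for one bad constituent: if $\bigwedge V$ is multiplicity-free, then $2^{\dim V}=\dim\bigwedge V$ is at most the sum of $\dim V(\lambda)$ over those dominant $\lambda$ that can occur, and since every such $\lambda$ is a sum of distinct weights of $V$, both the number of admissible $\lambda$ and their dimensions grow only polynomially in the data of $V$, while the left-hand side grows exponentially in $\dim V$; this forces $\dim V$ to be small relative to the rank and reduces part a) to a finite list of candidates per type, which are then checked individually. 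Your proposed substitutes --- ``tracking a single well-chosen weight multiplicity'' or ``two distinct Littlewood--Richardson fillings'' --- do not constitute such a mechanism: outside type $A$ the Littlewood--Richardson rule does not compute the relevant plethysm multiplicities without an extra branching step, and no rule is given for choosing the weight. As written, the proposal establishes the easy half and correctly organizes parts b) and c), but the core of part a) is missing.
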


\begin{proof}
Part a) is proven in~\cite[Theorem 4.7.1]{Ho1}, parts b) and c) in~\cite[Theorem 4.8]{Pe}.
\end{proof}

A very important instance of a multiplicity-free space is given by the action of $\GL_n \times \GL_m$ on $V = \CC^n \otimes \CC^m$. This action is irreducible and we can intepret it either as a purely even or purely odd super space. Accordingly, we are dealing with the symmetric or exterior algebra on $V$. In both cases, the arising decomposition is not only multiplicity-free but also the isotypic components for $\GL_n$ and $\GL_m$ in this decomposition stand in a bijective correspondence. This is called {\it $(\GL_n,\GL_m)$ (skew) duality} and the explicit formulas for the homogeneous components are given by

\begin{equation} \label{eqn:GLnGLm_duality}
S^k(\CC^n\otimes \CC^m) = \bigoplus_{|\lambda|=k, \ell(\lambda)\leq \min\{n,m\}} V(\lambda)^{(n)} \otimes V(\lambda)^{(m)}
\end{equation}

and

\begin{equation} \label{eqn:GLnGLm_skew_duality}
\bigwedge^k(\CC^n\otimes \CC^m) = \bigoplus_{|\lambda|=k, \ell(\lambda)\leq n, \lambda_1 \leq m} V(\lambda)^{(n)} \otimes V(\lambda^t)^{(m)}.
\end{equation}

A proof for both cases can be found in~\cite{Ho1}. We will use these actions and the bijective correspondence for further calculations.

There are two other series of modules for which symmetric and antisymmetric plethysms are easy to describe, namely the actions of $\GL_n$ on $S^2\CC^n$ and $\bigwedge^2\CC^n$. For this reason it is convenient to introduce the following terms: A Young diagram of shape $(s,1^t)$ is called a {\it $(s,t)$-hook}. A sequence of $(s_i,t_i)$-hooks, where $s_{i+1} \leq s_{i} - 1$ and $t_{i+1} \leq t_{i} - 1$ can be combined to a {\it nested hook}, i.e. a regular Young diagram $\lambda$ such that the boxes in the angle of each hook form the diagonal of $\lambda$. We have

\begin{eqnarray}
S^n(S^2\CC^n)                &=& \bigoplus_{|\lambda|=2n,~\lambda_i~even} V(\lambda),     \label{eqn:S2-sym} \\
S^n(\bigwedge^2\CC^n)        &=& \bigoplus_{|\lambda|=2n,~\lambda_i~even} V({\lambda^t}) \label{eqn:L2-sym} 
\end{eqnarray}

for the symmetric powers of $S^2\CC^n$ and $\bigwedge^2\CC^n$, while their skew symmetric powers decompose by

\begin{eqnarray}
\bigwedge^n(S^2\CC^n)        &=& \bigoplus_{\lambda \in D} V(\lambda),                         \label{eqn:S2-skew} \\
\bigwedge^n(\bigwedge^2\CC^n) &=& \bigoplus_{\lambda \in E} V(\lambda).                        \label{eqn:L2-skew}
\end{eqnarray}

Here, $D$ is the set of all partitions that consist of a nested $(r_i+1,r_i-1)$-hook with $\sum r_i = n$; and $E$ the set of all partitions that consist of a nested $(r_i,r_i)$-hook where also $\sum r_i = n$. (Proofs for all these decomposition can be found in~\cite{Ho1, Ho2}.) \bigskip

We have no generalization of~\eqref{eqn:S2-skew} to $V = S^k\CC^n$ for arbitray $k$. But at least for the second exterior power one can easily show that

\begin{equation} \label{eqn:Sk-plethysms}
\bigwedge^2 S^k\SL_2 = \bigoplus_{j = 0,\dots, \lfloor \frac{k-1}{2} \rfloor} V(2k - 2 - 4j)
\end{equation}

by writing down its character polynomial. Since $V = V_1 = S^k\SL_2$ is super MF, we need the above formula in some computations in Section~\ref{sec:simple}.

\section{Results} \label{sec:results}


Let $\rho: G \rightarrow V$ be a representation of a semisimple group on a super vector space $V$. The super MF property of $(G,V)$ actually only depends on $\rho(G)$. Hence, if $\varphi: H \rightarrow G$ is a surjective homomorphism, then $(H,V)$ is also super MF. Furthermore, it is obvious that $(G,V)$ is super MF if and only if $(G,V^*)$ has this property. 

\begin{definition} \label{def:geom_equiv}
Two representations $(G,\rho,V)$ and $(G',\rho',V')$ are said to be {\itshape geometrically equivalent} if there exists an isomorphism $\psi:V \stackrel{\sim}{\longrightarrow} V'$ such that for the induced isomorphism  $\GL(\psi): \GL(V) \rightarrow \GL(V')$ one has $\GL(\psi)(\rho(G))=\rho'(G')$. 

We write  $(G,\rho,V) \sim (G',\rho',V')$ for a pair of geometrically equivalent representations or, if the underlying homomorphisms are obvious, $(G,V)\sim (G',V')$.
\end{definition}

This definition takes into account the two problems stated above: It is immediate that $(G,\rho,V)$ and $(H,\rho\circ\varphi,V)$ are geometrically equivalent. Moreover, if we fix a maximal torus $T$ of $G$, then there is a automorphism $\theta$ of $G$ such that $\theta(t) = t^{-1}$ for all $t \in T$. By this, also $(G,V)$ and $(G,V^*)$ are geometrically equivalent.

Note that there are infinitely many series of reducible super MF spaces since for every such pair $(G_i,V_i)$ we can build the direct sum 

\begin{equation} \label{eqn:decomposable}
(G_1 \times G_2, V_1 \oplus V_2)
\end{equation}
which is again super MF since $P(V_1 \oplus V_2) = P(V_1) \otimes P(V_2)$. In order to avoid such a situation, we a representation {\it decomposable} if it is geometrically equivalent to a representation as in~\eqref{eqn:decomposable}.  Otherwise, we call it {\it indecomposable} and hence we will classifiy the indecomposable super MF ones. From the Definition~\ref{def:rep-graph} of a representation diagram it follows immediately that indecomposable representations are in $1 - 1$ correspondence with {\it connected} representation graphs. \bigskip

Every subgraph $\mathcal{G}'$ of a representation diagram $\mathcal{G}$ is also a representation diagram. A very useful result is the following:

\begin{prop} \label{prop:subgraph}
If the representation described by $\mathcal{G}$ is super MF, then the same is true for the underlying representation of $\mathcal{G}'$.
\end{prop}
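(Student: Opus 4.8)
The plan is to reduce the statement to the deletion of a single vertex and then induct on the number of deleted vertices, since every subgraph $\mathcal{G}'$ arises by successively removing vertices together with their incident edges. A vertex of $\mathcal{G}$ is either a \emph{module vertex} (one of the $U_p$ or $W_q$) or a \emph{group vertex} (a simple factor $G_h$ of $G$), and I would treat these two kinds of deletion separately.

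The deletion of a module vertex, say $U_k$, producing $V' = U_1\oplus\dots\oplus U_{k-1}\boxplus V_1$, is pure bookkeeping and presents no difficulty. By~\eqref{eqn:multilin} every homogeneous component of $P(V')$ is literally one of the homogeneous components of $P(V)$, namely $P^{(\mathbf{i}',\mathbf{j})}(V') = P^{((\mathbf{i}',0),\mathbf{j})}(V)$, the one whose exponent in the $U_k$-slot vanishes. Since Definition~\ref{def:superMF} imposes its condition on each component separately, the bound $\dim\Hom_G(\,\cdot\,,\Gamma)\le 1$ for all components of $P(V)$ restricts immediately to the analogous bound for all components of $P(V')$; the odd case (deletion of some $W_q$) is identical.

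The genuinely substantial case, which I expect to be the main obstacle, is the deletion of a group vertex $G_h$, because this is \emph{not} a restriction of the action but a reduction of the modules. Writing $G = G'\times G_h$ and each irreducible summand as an external tensor product $U_p = A_p\boxtimes B_p$ (resp.\ $W_q = C_q\boxtimes D_q$) with $A_p,C_q$ irreducible over $G'$ and $B_p,D_q$ irreducible over $G_h$, the subgraph describes $(G',V')$ with $V' = \bigoplus_p A_p\boxplus\bigoplus_q C_q$. My plan is to realize each homogeneous component of $P(V')$ as an extreme $G_h$-weight space inside the corresponding component of $P(V)$. Letting $\mu_{B_p},\mu_{D_q}$ be the highest weights of $B_p,D_q$ and $\mu^{\ast}=\sum_p i_p\mu_{B_p}+\sum_q j_q\mu_{D_q}$, the Cauchy formulas $S^i(A\otimes B)=\bigoplus_{\lambda\vdash i}S^{\lambda}A\otimes S^{\lambda}B$ and $\bigwedge^{j}(C\otimes D)=\bigoplus_{\lambda\vdash j}S^{\lambda}C\otimes S^{\lambda^{t}}D$ show, factor by factor, that $\mu^{\ast}$ is the dominance-maximal $G_h$-weight occurring in $P^{(\mathbf{i},\mathbf{j})}(V)$ and that its $\mu^{\ast}$-weight space equals, as a $G'$-module, $\bigotimes_p S^{i_p}A_p\otimes\bigotimes_q\bigwedge^{j_q}C_q = P^{(\mathbf{i},\mathbf{j})}(V')$. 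The key local computation is that the top $G_h$-weight space of $S^{i}(A\boxtimes B)$ is the line $\CC\,b^{\otimes i}$ tensored with $S^{i}A$, hence $\cong S^{i}A$, and correspondingly for the exterior factors.

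It then remains to verify that passing to this weight space cannot raise multiplicities. Here I would decompose $P^{(\mathbf{i},\mathbf{j})}(V)=\bigoplus_{\alpha}\Gamma'_{\alpha}\boxtimes\Gamma^{h}_{\alpha}$ into $G'\times G_h$-irreducibles, each occurring at most once by hypothesis. Because $\mu^{\ast}$ is the top weight of the whole module, any summand whose $G_h$-factor $\Gamma^{h}_{\alpha}$ contains the weight $\mu^{\ast}$ must have $\mu^{\ast}$ as its highest weight; thus on taking the $\mu^{\ast}$-weight space every other summand dies, while each surviving one contributes its one-dimensional highest-weight space and leaves exactly $\Gamma'_{\alpha}$. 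Consequently the multiplicity of a given $G'$-irreducible $\Gamma'$ in $P^{(\mathbf{i},\mathbf{j})}(V')$ equals the multiplicity of the single $G$-irreducible $\Gamma'\boxtimes\Gamma^{h}_{\mu^{\ast}}$ in $P^{(\mathbf{i},\mathbf{j})}(V)$, which is at most $1$. Hence $(G',V')$ is super MF, and the induction closes. The only points requiring care are the weight combinatorics identifying the extreme weight space with the reduced component, and the observation, crucial to the whole argument, that the extreme weight space of an irreducible module is one-dimensional.
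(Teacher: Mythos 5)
Your proof is correct for the cases it treats, and it takes a genuinely different, more self-contained route than the paper. The paper's own proof of Proposition~\ref{prop:subgraph} is essentially a citation: it invokes \cite[Proposition 4.7]{Pe}, where the statement is established for purely odd spaces by means of $(\GL_n,\GL_m)$ skew duality~\eqref{eqn:GLnGLm_skew_duality}, together with the remark that the argument carries over word-by-word once one substitutes the symmetric duality~\eqref{eqn:GLnGLm_duality} where needed. You instead give a complete argument from scratch: induction on vertex deletions, the observation that deleting a module vertex is immediate from~\eqref{eqn:multilin}, and, for deletion of a group factor $G_h$, the identification of $P^{({\bf i},{\bf j})}(V')$ with the dominance-maximal $G_h$-weight space of $P^{({\bf i},{\bf j})}(V)$, combined with the fact that any irreducible constituent of $P^{({\bf i},{\bf j})}(V)$ whose $G_h$-factor contains the weight $\mu^{*}$ must be the irreducible $V(\mu^{*})$ and contributes exactly its one-dimensional highest-weight line. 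I checked this mechanism and it is sound; in fact the Cauchy formulas you quote are not even essential for the identification, since writing each $B_p$ as $\CC b_p\oplus (B_p)_{<}$ (highest-weight line plus lower weight spaces) gives it directly. What your route buys is independence from \cite{Pe}; what the paper's route buys is brevity and whatever extra generality \cite{Pe} established.

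There is, however, one inaccurate step in your reduction: the claim that every subgraph arises by successively removing vertices holds only for \emph{induced} subgraphs. Under the standard graph-theoretic reading (which the sentence preceding the proposition suggests), a subgraph may also delete the edge joining $U_p$ to $G_h$ while keeping both vertices; the underlying representation then replaces $U_p=A_p\otimes B_p$ by $A_p$ but retains the full group $G'\times G_h$ and its action on all other summands. Your weight-space argument does not apply verbatim to this case: taking the $\mu^{*}$-weight space destroys the $G_h$-module structure that must survive on the untouched factors, and the conclusion needed there is multiplicity-freeness over $G'\times G_h$, not over $G'$. The statement is still true, and can be patched in the same spirit: inside $S^{i_p}(A_p\otimes B_p)$ take the Cauchy summand $S^{i_p}A_p\otimes S^{i_p}B_p$ and its $G'\times G_h$-submodule $S^{i_p}A_p\otimes V(i_p\mu_{B_p})$ (with the skew analogue $\bigwedge^{j_q}C_q\otimes S^{j_q}D_q$ at odd vertices). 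Writing the product $R$ of the remaining factors as $\bigoplus_\beta M_\beta\otimes N_\beta$, each occurrence of a given $\Gamma'\otimes\Gamma^h$ in $S^{i_p}A_p\otimes R$ comes from some $\beta$ with $N_\beta\cong\Gamma^h$, and pairing it with the Cartan component $V(i_p\mu_{B_p}+\nu)\subseteq V(i_p\mu_{B_p})\otimes N_\beta$ (where $\nu$ is the highest weight of $\Gamma^h$) produces, in pairwise distinct summands, as many copies of $\Gamma'\otimes V(i_p\mu_{B_p}+\nu)$ inside $P^{({\bf i},{\bf j})}(V)$; by hypothesis there is at most one. With this supplement, or with an explicit restriction to induced subgraphs (which is all the paper ever uses in its applications), your proof is complete.
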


\begin{proof} This is proven in~\cite[Proposition 4.7]{Pe} for purely odd super spaces. The proof makes use of $(\GL_n,\GL_m)$ skew duality~\eqref{eqn:GLnGLm_skew_duality} and can be translated word-by-word to the general setting by simply exchanging skew duality~\eqref{eqn:GLnGLm_skew_duality} by the symmetric counterpart~\eqref{eqn:GLnGLm_duality}, if necessary. \end{proof}

As we shall see, the representation that is desribed by the graph in Example~\ref{ex:graph} is super MF. By  Proposition~\ref{prop:subgraph} it follows that e.g. \[ \!\!\!\superIV{\SL_2}{\SO_{2n+1}}{}{} \,\,\, \]  is also super MF. \bigskip

Before we state our main result, we give a characterization of super MF spaces in terms of the $G$-invariant differential operators $\PD(V)^G$. Denote by $\CC[G]$ the group algebra of a reductive group $G$.

\begin{theorem} \label{lemm:End-irreducibly}
Let $G$ be a reductive group that acts on a super vector space $V$. Then, as a joint $\CC[G]\otimes \PD(V)^G$ module,

\begin{equation} \label{eqn:super-duality}
P(V) = \bigoplus_{\lambda \in \Xi} V(\lambda)\otimes V^\lambda, 
\end{equation}
where $V^\lambda$ are pairwise non-isomorphic ireducible represntations of $\PD(V)^G$. \hfill $\Box$
\end{theorem}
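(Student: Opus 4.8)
The plan is to exhibit $P(V)$ as a module over the double commutant pair $(\CC[G], \PD(V)^G)$ and invoke the general double commutant (Schur--Weyl type) theorem. First I would recall that $\PD(V)^G = \End_G(P(V))$, the algebra of all $G$-equivariant endomorphisms of $P(V)$ that are realized by differential operators; more precisely, the relevant fact is that $\PD(V)$ acts on $P(V)$ and its $G$-invariant part is exactly the commutant of the $G$-action inside $\End(P(V))$, at least on each finite-dimensional isotypic piece. Since $G$ is reductive, $P(V)$ decomposes as a $G$-module into isotypic components $P(V) = \bigoplus_{\lambda} M_\lambda$, where $M_\lambda \cong V(\lambda) \otimes \Hom_G(V(\lambda), P(V))$ and $V(\lambda)$ runs over the irreducibles of $G$ occurring in $P(V)$. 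Setting $V^\lambda := \Hom_G(V(\lambda), P(V))$ gives the bigraded decomposition $P(V) = \bigoplus_\lambda V(\lambda) \otimes V^\lambda$ as a $\CC[G]$-module; this part is purely formal from reductivity.

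\medskip

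Next I would argue that each multiplicity space $V^\lambda$ carries a natural action of $\PD(V)^G$ and that this action is irreducible, with distinct $\lambda$ giving inequivalent $\PD(V)^G$-modules. The action of $\PD(V)^G$ preserves each $G$-isotypic component $M_\lambda$ (since it commutes with $G$), hence acts on $V^\lambda$; because $\PD(V)^G$ is precisely the commutant of $\CC[G]$ in $\End(P(V))$, the double commutant theorem for the reductive pair $(\CC[G], \PD(V)^G)$ forces the $V^\lambda$ to be irreducible and pairwise non-isomorphic as $\PD(V)^G$-modules. The cleanest way to organize this is to note that $\CC[G]$ and $\PD(V)^G$ are mutual commutants inside $\End(P(V))$ (interpreted on each finite-dimensional graded piece), and then apply the standard structure theorem for a semisimple algebra and its commutant acting on a finite-dimensional (here, filtered/graded-finite) space.

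\medskip

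The main obstacle, and the step requiring genuine care rather than routine bookkeeping, is the double commutant itself: one must verify that $\PD(V)^G$ is the \emph{full} commutant of $\CC[G]$ in $\End(P(V))$, so that the bimodule decomposition is forced to be multiplicity-free on the $\PD(V)^G$ side. This is where Proposition~\ref{prop:quantum-iso} does the essential work: the isomorphism $\varphi\colon P(\tilde V)_{\langle\cdot,\cdot\rangle} \to \PD(V)$ identifies the differential operators with the quantized super-symmetric algebra, and the $G$-invariants $P(\tilde V)_{\langle\cdot,\cdot\rangle}^G$ contain enough operators to separate the multiplicity spaces. Concretely, the $G$-equivariant maps $V(\lambda) \otimes V(\lambda)^* \to \PD(V)^G$ furnish, via Schur's lemma, operators that act as intertwiners between copies of $V(\lambda)$ inside $P(V)$, and surjectivity of $\varphi$ guarantees that \emph{every} $G$-equivariant endomorphism arises from a differential operator. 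Since $P(V)$ is a finite-dimensional space in each total degree and $G$ is reductive, passing to these graded pieces reduces the claim to the classical double commutant theorem, from which irreducibility and pairwise non-isomorphy of the $V^\lambda$ follow. I would therefore structure the proof as: (i) isotypic decomposition from reductivity, (ii) identification of $\PD(V)^G$ with the commutant via Proposition~\ref{prop:quantum-iso}, and (iii) application of the double commutant theorem to conclude.
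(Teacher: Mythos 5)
Your step (i) (the isotypic decomposition $P(V)=\bigoplus_\lambda V(\lambda)\otimes V^\lambda$ from reductivity, with $\PD(V)^G$ acting on the multiplicity spaces) is fine, but step (ii), which you yourself flag as carrying the whole weight, contains a genuine error: $\PD(V)^G$ is \emph{not} the full commutant of $\CC[G]$ in $\operatorname{End}(P(V))$, and Proposition~\ref{prop:quantum-iso} cannot make it so. That proposition is an isomorphism of the abstract quantized algebra $P(\tilde V)_{\langle\cdot,\cdot\rangle}$ \emph{onto} $\PD(V)$; its ``surjectivity'' says nothing about differential operators exhausting the $G$-equivariant endomorphisms of $P(V)$. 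In fact they never do once $V_0\neq 0$: the commutant of $\CC[G]$ is $\prod_{\lambda}\operatorname{End}(V^\lambda)$, which has uncountable dimension whenever $P(V)$ is infinite dimensional (either infinitely many $\lambda$ occur, or some multiplicity space $V^\lambda$ is infinite dimensional), whereas $\PD(V)$ is countable dimensional. Your hedges do not repair this: the isotypic pieces are typically \emph{not} finite dimensional, and while the graded pieces $P^d(V)$ are, they are not preserved by $\PD(V)^G$ (differential operators shift degree), so there is no restriction homomorphism $\PD(V)^G\to\operatorname{End}(P^d(V))$ to which the classical double commutant theorem could be applied. Even if one isolates the degree-preserving part of $\PD(V)^G$, a per-degree double commutant argument only concerns the truncations $V^\lambda_d$ under the varying algebras $\operatorname{End}_G(P^d(V))$; it does not by itself give irreducibility of the full, infinite-dimensional $V^\lambda$, nor pairwise inequivalence, as modules over the single algebra $\PD(V)^G$. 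That infinite-dimensionality is precisely the difficulty of the theorem.

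What is missing is exactly the ingredient the paper names in lieu of a proof: the theorem is the super-symmetric version of \cite[Theorem 4.5.14]{GW}, and it rests on a \emph{generalization of Burnside's theorem}, i.e.\ on the fact that the full Weyl--Clifford algebra $\PD(V)$ acts irreducibly (equivalently, densely in the finite topology) on $P(V)$ --- a fact your outline never states or uses. The correct architecture is: (a) density of $\PD(V)$, which for $\PD(V)$ can be seen directly since the operators $x^\alpha\partial^\beta$ with $|\beta|=d$, $|\alpha|=e$ restrict on $P^d(V)$ to matrix units spanning $\Hom\bigl(P^d(V),P^e(V)\bigr)$; (b) reductivity of $G$ and local finiteness of $\PD(V)$ as a $G$-module, so that taking invariants is exact (Reynolds operator) and $\PD(V)^G$ surjects onto each $\Hom_G\bigl(P^d(V),P^e(V)\bigr)$; (c) a patching argument across degrees --- e.g.\ using the $G$-invariant Euler operator, which lies in $\PD(V)^G$, to show that a submodule containing $v$ contains every graded component of $v$ --- from which irreducibility of each $V^\lambda$ and, via invariant operators separating distinct isotypic components, pairwise inequivalence follow. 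Your proposal substitutes the false commutant identification for (a) and (b), and so skips the actual mathematical content; repaired along the lines above it becomes, in substance, the proof of \cite[Theorem 4.5.14]{GW} that the paper cites.
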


This theorem is a super symmetric version of~\cite[Theorem 4.5.14]{GW} and is based on a generalization of Burnside's theorem. It has the following important


%
%

\begin{corollary} \label{cor:MF-abelian}
Let $G$ be a semisimple group acting on $V = V_0 \boxplus V_1$. This representation is super MF if and only if $\PD(V)^{\tilde G}$ is abelian.
\end{corollary}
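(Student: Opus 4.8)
The plan is to deduce Corollary~\ref{cor:MF-abelian} directly from Theorem~\ref{lemm:End-irreducibly}, applied to the reductive group $\tilde G = (\CC^*)^{k+l}\times G$ with its saturated action, together with the observation (made just after Definition~\ref{def:superMF}) that $(G,V)$ is super MF if and only if $P(V)$ is multiplicity-free as a $\tilde G$-module. The strategy is thus to reformulate both the super MF condition and the abelianness of $\PD(V)^{\tilde G}$ in terms of the decomposition~\eqref{eqn:super-duality} and to show that each is equivalent to the representations $V^\lambda$ being one-dimensional.

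First I would invoke Theorem~\ref{lemm:End-irreducibly} for the group $\tilde G$ to write $P(V)=\bigoplus_{\lambda\in\Xi}V(\lambda)\otimes V^\lambda$ as a joint $\CC[\tilde G]\otimes\PD(V)^{\tilde G}$-module, where the $V^\lambda$ are pairwise non-isomorphic irreducible $\PD(V)^{\tilde G}$-modules. Next I would argue the forward direction: if $(G,V)$ is super MF, then $P(V)$ is multiplicity-free as a $\tilde G$-module, so for each $\lambda$ the $\tilde G$-isotypic component $V(\lambda)\otimes V^\lambda$ contains $V(\lambda)$ exactly once, forcing $\dim V^\lambda=1$. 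A one-dimensional irreducible module over an algebra exists for each $\lambda$, and since the $V^\lambda$ exhaust the irreducible $\PD(V)^{\tilde G}$-modules and are all one-dimensional, every irreducible representation of $\PD(V)^{\tilde G}$ is one-dimensional; a (finitely generated) algebra all of whose irreducibles are one-dimensional is abelian, which gives that $\PD(V)^{\tilde G}$ is abelian.

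For the converse I would run the same equivalence backwards: if $\PD(V)^{\tilde G}$ is abelian then all its irreducible modules are one-dimensional, so in particular each $V^\lambda$ appearing in~\eqref{eqn:super-duality} satisfies $\dim V^\lambda=1$. Consequently the $\tilde G$-module $V(\lambda)\otimes V^\lambda\cong V(\lambda)$ is irreducible, and because the $V^\lambda$ are pairwise non-isomorphic as $\PD(V)^{\tilde G}$-modules the summands $V(\lambda)$ are pairwise distinct. Hence $P(V)=\bigoplus_{\lambda\in\Xi}V(\lambda)$ is multiplicity-free as a $\tilde G$-module, which is precisely the condition~\eqref{eqn:MF} equivalent to $(G,V)$ being super MF.

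The main obstacle is the purely algebraic step linking abelianness of $\PD(V)^{\tilde G}$ to the one-dimensionality of its irreducible modules. One direction is elementary (over $\CC$ an abelian algebra has only one-dimensional irreducibles), but the other requires knowing that $\PD(V)^{\tilde G}$ is sufficiently nice for ``all irreducibles one-dimensional $\Rightarrow$ abelian'' to hold; here one should use that $\PD(V)^{\tilde G}$ acts faithfully and completely reducibly on $P(V)$ via the decomposition~\eqref{eqn:super-duality}, so that the algebra embeds into $\prod_\lambda \operatorname{End}(V^\lambda)$. When every $\dim V^\lambda=1$ this target product is a product of copies of $\CC$, hence commutative, and faithfulness transports commutativity back to $\PD(V)^{\tilde G}$. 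I would make sure to state explicitly that it is the \emph{faithful} action on $P(V)$ guaranteed by Theorem~\ref{lemm:End-irreducibly} that closes this gap, rather than any abstract structure theory, since that is the one place where the super symmetric version of Burnside's theorem does the real work.
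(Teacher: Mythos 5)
Your proof is correct, and its backbone is the same as the paper's: both rest on the decomposition~\eqref{eqn:super-duality} of Theorem~\ref{lemm:End-irreducibly} and the equivalence between multiplicity-freeness and $\dim V^\lambda = 1$. Your converse direction (abelian $\Rightarrow$ all $V^\lambda$ one-dimensional $\Rightarrow$ super MF) is exactly the paper's argument. Where you differ is the forward direction: the paper cites Remark~\ref{rem:harmonic}, i.e.\ it uses Proposition~\ref{prop:quantum-iso} to write $\PD(V)^{\tilde G} = \bigoplus_\lambda \bigl(V(\lambda)\otimes V(\lambda)^*\bigr)^{\tilde G}$ and produces a Schur basis $D_\lambda$ acting by scalars on each irreducible summand, whence commutativity; you instead stay inside Theorem~\ref{lemm:End-irreducibly}, noting that super MF forces $\dim V^\lambda = 1$ and that the faithful action of $\PD(V)^{\tilde G}$ on $P(V)$ then embeds the algebra into $\prod_\lambda \operatorname{End}(V^\lambda) = \prod_\lambda \CC$. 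Your route is more uniform and self-contained (it never invokes the quantum isomorphism for this corollary), while the paper's route has the advantage of exhibiting the explicit commuting basis $D_\lambda$, which Remark~\ref{rem:harmonic} wants anyway for spectral purposes. Two cautions. First, your intermediate slogan ``a (finitely generated) algebra all of whose irreducibles are one-dimensional is abelian'' is false in general --- upper-triangular matrices are a counterexample --- but you correctly diagnosed this yourself and repaired it with the faithfulness/embedding argument, which is the right fix. Second, faithfulness is not literally part of the statement of Theorem~\ref{lemm:End-irreducibly}; it comes from the realization of $\PD(V)$ as an algebra of endomorphisms of $P(V)$ mentioned just before Proposition~\ref{prop:quantum-iso}, so that is the citation you should attach to that step.
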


\begin{proof}
Let $(G,V)$ be super MF, i.e. $P(V)$ is multiplicity-free under $\tilde{G}$. Then, the commutativity of $\PD(V)^{\tilde{G}}$ follows as in Remark~\ref{rem:harmonic}. On the other hand, if $\PD(V)^{\tilde{G}}$ is abelian, all representations $V^\lambda$ in~\eqref{eqn:super-duality} have dimension $1$.
\end{proof}

Although being an alternative characterization of super MF modules, this criterion is not easily read off from an arbitrary module $V$, since the task of determining the algebra structure of $\PD(V)^G$ is at least as hard as determining the multiplicities of the $G$ module $P(V)$. However, it makes it possible to deduce a labor-saving property on dual submodules: 
Recall that $(G,V) \sim (G, V^*)$ for arbitrary representations. Unfortunately, it is not true in general that $(G, V \oplus W) \sim (G,V \oplus W^*)$ for $G$ modules $V$ and $W$. Nevertheless, we will claim that a space $(G,V\oplus W)$ is super MF whenever $(G,V\oplus W^*)$ is super MF. Thus, in the classification of super MF spaces, we can ignore all extra cases coming from replacing a submodule by its dual.

\begin{corollary} \label{cor:dual-statements}
Let $G$ be a group that acts on super vector spaces $V = V_0 \boxplus V_1$ and $W = W_0 \boxplus W_1$. Then, $V \oplus W$ is super MF if and only if $V \oplus W^*$ is super MF.
\end{corollary}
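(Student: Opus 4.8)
The plan is to deduce Corollary~\ref{cor:dual-statements} from the abelian-criterion of Corollary~\ref{cor:MF-abelian} by exhibiting a natural isomorphism of the invariant differential operator algebras attached to $V \oplus W$ and to $V \oplus W^*$. By Corollary~\ref{cor:MF-abelian}, the space $V \oplus W$ is super MF if and only if $\PD(V\oplus W)^{\widetilde{G}}$ is abelian, and likewise $V \oplus W^*$ is super MF if and only if $\PD(V \oplus W^*)^{\widetilde{G}}$ is abelian. Thus it suffices to show that these two algebras are isomorphic as associative algebras (an anti-isomorphism would do equally well, since commutativity is preserved under both). The key observation is that dualizing only the summand $W$ does not change the relevant $\widetilde{G}$-action up to the automorphism $\theta$ introduced after Definition~\ref{def:geom_equiv}, so the invariants should match.

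First I would invoke Proposition~\ref{prop:quantum-iso} to replace the geometric object $\PD(\cdot)$ by the algebraic model $P(\widetilde{\,\cdot\,})_{\langle\cdot,\cdot\rangle}$. For the space $V \oplus W$ one forms $\widetilde{V \oplus W} = (V \oplus W) \oplus (V \oplus W)^* = \widetilde V \oplus \widetilde W$, equipped with the super symmetric pairing~\eqref{eqn:canoncial_pairing}; similarly $\widetilde{V \oplus W^*} = \widetilde V \oplus \widetilde{W^*}$, and one notes $\widetilde{W^*} = W^* \oplus W = \widetilde W$ as vector spaces. Hence the two quantized algebras are built on \emph{the same} underlying super vector space $\widetilde V \oplus \widetilde W$ with the same quantization relations~\eqref{eqn:super-commutator}, since the canonical pairing on $\widetilde W$ and on $\widetilde{W^*}$ agree up to the sign conventions inherent in~\eqref{eqn:canoncial_pairing}. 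The only difference lies in how $G$ acts on the $W$-part: in one case through the given representation on $W$, in the other through its dual.

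Next I would reconcile the two $G$-actions. As recalled after Definition~\ref{def:geom_equiv}, for a fixed maximal torus $T$ of $G$ there is an automorphism $\theta$ of $G$ with $\theta(t) = t^{-1}$ on $T$, which realizes the geometric equivalence $(G,W) \sim (G,W^*)$. Applying $\theta$ to the $W$-factor while leaving $V$ fixed gives a vector-space identity $\widetilde V \oplus \widetilde W$ carrying two $G$-actions that differ precisely by $\theta$ on the $W$-coordinates; the corresponding saturated tori $(\CC^*)^{k+l}$ match since they are indexed identically by the submodules of $V \oplus W$ and $V \oplus W^*$. Because $\theta$ is an algebra automorphism of $\CC[G]$ and the pairing~\eqref{eqn:canoncial_pairing} is $\theta$-equivariant on the $W$-block (dualizing a representation negates torus weights, which is exactly what $\theta$ does), the induced map on the quantized algebra intertwines the $\widetilde G$-actions and carries invariants to invariants. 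This yields an isomorphism $\PD(V \oplus W)^{\widetilde G} \cong \PD(V \oplus W^*)^{\widetilde G}$ of associative algebras.

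Finally, combining this isomorphism with Corollary~\ref{cor:MF-abelian} closes the argument: one algebra is abelian if and only if the other is, so $V \oplus W$ is super MF if and only if $V \oplus W^*$ is. The main obstacle I anticipate is the careful bookkeeping in the second and third steps — verifying that the sign in the super pairing~\eqref{eqn:canoncial_pairing} behaves consistently under the identification $\widetilde{W^*} \cong \widetilde W$ and that $\theta$ genuinely intertwines the quantization relations rather than merely the linear $G$-action. Once the $\theta$-equivariance of the pairing is checked, the passage to invariants and the appeal to the abelian criterion are formal.
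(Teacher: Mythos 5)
Your overall route is the paper's: pass through Proposition~\ref{prop:quantum-iso} to identify both algebras of invariant differential operators with the quantized algebra built on $\tilde{V}\oplus\tilde{W}$, then apply the abelian criterion of Corollary~\ref{cor:MF-abelian}. Up to the end of your second paragraph (excluding its last sentence) this is exactly the paper's proof, and it is already complete at that point: since $W^{**}$ is canonically isomorphic to $W$ \emph{as a $G$-module}, the super spaces $\widetilde{V\oplus W}$ and $\widetilde{V\oplus W^{*}}$ coincide --- same underlying space $\tilde{V}\oplus\tilde{W}$, same $G$-action, same pairing \eqref{eqn:canoncial_pairing} up to reordering of summands --- so the two quantized algebras are equal as $G$-algebras and there is nothing left to reconcile.

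Your third paragraph, however, contains a genuine error. The assertion that the two actions ``differ precisely by $\theta$ on the $W$-coordinates'' is false (they do not differ at all), and the proposed repair --- applying $\theta$ to the $W$-factor ``while leaving $V$ fixed'' --- is not a legitimate operation: $\theta$ is an automorphism of the group $G$, so precomposing the representation with $\theta$ changes the action on \emph{every} summand simultaneously; it would turn $V$ into (a module isomorphic to) $V^{*}$ as well. This is exactly the obstruction the paper points out just before the corollary, namely that $(G,V\oplus W)\sim(G,V\oplus W^{*})$ fails in general, and it is the very reason the corollary requires the differential-operator detour instead of a geometric-equivalence argument. Had your proof genuinely depended on this step, it would collapse; as it stands, simply deleting the paragraph yields a correct proof. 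The only place a (harmless) discrepancy does arise is in the saturated torus $\tilde{G}$: the copy of $\CC^{*}$ attached to an irreducible summand of $W$ acts on that summand and its dual by $(t,t^{-1})$, while the copy attached to the corresponding summand of $W^{*}$ acts by $(t^{-1},t)$. These one-parameter subgroups have the same image in $\GL(W\oplus W^{*})$, so the $\tilde{G}$-invariants agree; that one-line observation, not the automorphism $\theta$, is the correct bookkeeping.
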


\begin{proof}
By Proposition~\ref{prop:quantum-iso}, $\PD(V\oplus W) \simeq P(\tilde{V} \oplus \tilde{W})_{\left<\cdot,\cdot\right>} \simeq \PD(V \oplus W^*)$. Since this is also an algebra isomorphism, $\PD(V\oplus W)^G$ is commutative if and only if $\PD(V \oplus W^*)^G$ is commutative and the claim follows from Corollary~\ref{cor:MF-abelian}.
\end{proof}

With this result by hand, we can state the classification in a compact form.

\begin{theorem} \label{thm:proper-super}
Let $(G,V)$ be an indecomposable super MF representation on a proper super space $V$. Then, $G$ has at most three simple factors. Up to geometric equivalence (and a possible exchange of irreducible submodules by their duals), all super MF are given by the following list. \bigskip

{\upshape a)} If $G$ is simple:

1) $\superI{\SL_n}{}{}$,                   \hfill 2) $\superI{\SL_2}{}{S^2\CC^2}$, \hfill
3) $\superI{\SL_4}{}{\bigwedge^2\CC^4}$,   \hfill 4) $\superI{\SL_n}{S^2\CC^n}{}$, \hfill 
5) $\superI{\Sp_4}{}{\bigwedge^2_0\CC^4}$, \bigskip

{\upshape b)} If $G = G_1 \times G_2$:

1) $\!\!\!\superIV{\SL_n}{\SL_m}{}{}$,  \hfill     
2) $\!\!\!\superIV{\SL_2}{\SO_{2n+1}}{}{} \,\,\,\,\,\,\,$, \hfill
3) $\!\!\!\superAI{\SL_n}{\SL_m}{}{}$,  \hfill    
 
4) $\!\!\!\superAI{\SL_2}{\SL_n}{}{S^2\CC^{2}}$, \hfill
5) $\!\!\!\superAI{\SL_2}{\Sp_{2n}}{}{}$,   \hfill
6) $\!\!\!\superAI{\SL_2}{\Sp_{2n}}{}{S^2\CC^{2}}$ \bigskip

{\upshape c)} If $G =  G_1 \times G_2 \times G_3$:

1) $\!\!\!\!\!\superN{\SL_n}{\SL_2}{\SL_m}{}{}\!\!\!\!$,
2) $\!\!\superN{\Sp_{2n}}{\SL_2}{\SL_m}{}{}\!\!\!\!$, \hfill
3) $\!\!\superN{\SL_n}{\SL_2}{\SO_{2m+1}}{}{}$,   \hfill
4) $\!\!\!\!\!\superN{\Sp_{2n}}{\SL_2}{\SO_{2m+1}}{}{}$\,\,\,\,.

\end{theorem}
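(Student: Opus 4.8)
The plan is to prove the two halves of the classification separately: first that every representation appearing in the list is super MF (sufficiency), and then that the list is exhaustive (completeness). For both directions the starting point is a reduction that is forced by the very definition of the super symmetric algebra. Since $P(V) = S(V_0)\otimes\bigwedge(V_1)$, a necessary condition for $(G,V)$ to be super MF is that the purely even part $(G,V_0)$ and the purely odd part $(G,V_1)$ are each super MF in the sense of Definition~\ref{def:superMF}. Consequently every even submodule $U_p$ must occur in the list of Theorem~\ref{thm:symmetric} and every odd submodule $W_q$ in the list of Theorem~\ref{thm:skew-symmetric}, and each simple factor $G_r$ can act on a given submodule only in one of the constrained ways recorded there. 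This already restricts both the admissible vertices and the admissible edge labels of the representation diagram $\mathcal{G}(G,V)$ drastically, and it is the combinatorial backbone of everything that follows.

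For the sufficiency direction I would pass to the saturated action of $\tilde G = (\CC^*)^{k+l}\times G$ and verify directly that $P(V)$ is multiplicity-free as a $\tilde G$-module. The homogeneous components $P^{({\bf i},{\bf j})}(V)$ are computed from the explicit plethysm formulas gathered in Section~\ref{sec:prelim}: $(\GL_n,\GL_m)$ duality~\eqref{eqn:GLnGLm_duality} and its skew analogue~\eqref{eqn:GLnGLm_skew_duality} for the tensor-product submodules, the decompositions~\eqref{eqn:S2-sym}--\eqref{eqn:L2-skew} for the $S^2\CC^n$ and $\bigwedge^2\CC^n$ submodules, and~\eqref{eqn:Sk-plethysms} for the $S^k\CC^2$ factors. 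For the chain-type diagrams in parts b) and c) one applies these dualities factor by factor along the diagram and checks that no irreducible $\tilde G$-constituent is repeated. Here Corollary~\ref{cor:dual-statements} is a genuine labor-saver: it lets us replace any submodule by its dual without affecting the super MF property, so entire families of candidates collapse to a single representative.

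For completeness I would first bound the shape of the representation diagram. The decisive tool is Proposition~\ref{prop:subgraph}, which says that every subdiagram of a super MF diagram is again super MF. It therefore suffices to assemble a finite list of minimal \emph{forbidden} configurations --- small diagrams that are \emph{not} super MF --- and to show that any diagram outside the classification contains one of them as a subdiagram. Concretely, one bounds the valence of each vertex: a single simple factor can be joined to only a bounded number of submodules before some $P^{({\bf i},{\bf j})}(V)$ acquires a multiplicity, and, dually, each submodule is a tensor product acted on by at most two factors. These valence bounds rule out branching and force $\mathcal{G}(G,V)$ to be a short connected path alternating between factors and submodules, which is exactly what gives the bound of at most three simple factors and cuts the surviving possibilities down to the finite list displayed in the theorem.

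The step I expect to be the main obstacle is the completeness of part a), the simple case, since there is no second factor to strip off and the subdiagram reduction of Proposition~\ref{prop:subgraph} buys almost nothing. Here one must compute the relevant plethysms by hand and exhibit a repeated irreducible constituent inside some component $P^{({\bf i},{\bf j})}(V)$; this is the technical heart of Section~\ref{sec:simple} and relies on formulas such as~\eqref{eqn:Sk-plethysms} together with direct character computations for the spin and exceptional representations. Once the simple case and the accompanying list of forbidden subdiagrams are established, the completeness of parts b) and c) proceeds far more mechanically: one checks that every connected two- or three-factor diagram not already in the list contains one of the forbidden subdiagrams, whence Proposition~\ref{prop:subgraph} excludes it, and the classification is complete.
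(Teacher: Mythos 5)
Your proposal follows essentially the same route as the paper: the necessary condition that $V_0$ and $V_1$ be separately MF (reducing candidates to Theorems~\ref{thm:symmetric} and~\ref{thm:skew-symmetric}), sufficiency via the explicit duality and plethysm formulas of Section~\ref{sec:prelim} applied to the saturated action, and completeness via Proposition~\ref{prop:subgraph} together with a finite list of forbidden subdiagrams and valence bounds, with the simple case settled by hand plethysm computations. This is precisely how the paper proceeds in Sections~\ref{sec:decomp}--\ref{sec:three}, where your forbidden configurations and valence bounds appear concretely as Lemmas~\ref{lemm:big-mama}, \ref{lemm:ex-three}, \ref{lemm:two_2}, \ref{lemm:SL-diagrams} and~\ref{lemm:4.23}.
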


\begin{corollary}
If $(G,V)$ is a saturated super MF representation then it is either occurs either in Theorem~\ref{thm:symmetric} or~\ref{thm:skew-symmetric} (if $V$ is purely even or odd) or, if $V$ is a proper super space, it occurs in Theorem~\ref{thm:proper-super}.
\end{corollary}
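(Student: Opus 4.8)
The plan is to prove the corollary as a direct synthesis of the three classification theorems together with the reduction results established earlier in this section; there is no substantive new computation, so the work is organized entirely around a case split on the parity of $V$. First I would reduce to the indecomposable case. Recall that by Definition~\ref{def:rep-graph} and the discussion following~\eqref{eqn:decomposable}, a representation $(G,V)$ is decomposable precisely when its representation graph $\mathcal{G}$ is disconnected, in which case $G$ splits as a product acting on the corresponding direct summands with $P(V_1\oplus V_2)=P(V_1)\otimes P(V_2)$. Since each connected component is a subgraph of $\mathcal{G}$, Proposition~\ref{prop:subgraph} shows that every indecomposable summand of a super MF representation is again super MF; conversely a direct sum of super MF spaces is super MF by this tensor product factorization. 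Hence it suffices to verify that every \emph{indecomposable} saturated super MF representation appears in one of the three theorems, after which the direct-sum construction accounts for all remaining (decomposable) cases.

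Next I would split into three mutually exclusive and exhaustive cases according to the parity of $V$. If $V$ is purely even, $V=V_0$, then $P(V)=S(V_0)$ and Definition~\ref{def:superMF} asserts exactly that every homogeneous component $S^{i_1}U_1\otimes\cdots\otimes S^{i_k}U_k$ decomposes multiplicity-free, which is precisely the condition that $(G,V_0)$ be a multiplicity-free space for the saturated torus $\tilde G$; thus $(G,V)$ occurs in the list of Theorem~\ref{thm:symmetric}. Symmetrically, if $V$ is purely odd, $V=V_1$, then $P(V)=\bigwedge(V_1)$ and super MF is exactly the skew multiplicity-free condition, so $(G,V)$ occurs in Theorem~\ref{thm:skew-symmetric}. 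Finally, if $V$ is a proper super space then both $V_0$ and $V_1$ are nonzero by definition, and the indecomposable super MF representations of this type are exactly those enumerated in Theorem~\ref{thm:proper-super}. Since these three cases cover all possibilities, the corollary follows.

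The only point requiring care — rather than a genuine obstacle — is the identification, in the two extremal cases, of the super MF condition of Definition~\ref{def:superMF} with the classical notions of multiplicity-free and skew multiplicity-free space used to state Theorems~\ref{thm:symmetric} and~\ref{thm:skew-symmetric}. This rests on the fact that the saturated action adjoins one copy of $\CC^*$ to $G$ for each irreducible summand, so that the torus acts by a distinct character on each bidegree $P^{(\mathbf{i},\mathbf{j})}(V)$; consequently multiplicity-freeness of $P(V)$ under $\tilde{G}$ is equivalent to multiplicity-freeness of each single component under $G$, which is exactly the equivalence recorded around~\eqref{eqn:MF}. Once this identification is in place the corollary follows with no further argument, since all the substantive content — the completeness of the three lists — has already been carried out in the cited theorems and in Sections~\ref{sec:decomp}--\ref{sec:three}.
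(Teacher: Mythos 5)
Your proof is correct and matches the paper's intent exactly: the corollary is stated there without proof precisely because it is the immediate synthesis of Theorems~\ref{thm:symmetric}, \ref{thm:skew-symmetric} and~\ref{thm:proper-super} via the parity case split, together with the identification of Definition~\ref{def:superMF} with multiplicity-freeness under the saturated group $\tilde{G}$ recorded around~\eqref{eqn:MF}. Your explicit reduction to indecomposable summands (connected components of the representation graph, Proposition~\ref{prop:subgraph}, and $P(V_1\oplus V_2)=P(V_1)\otimes P(V_2)$) spells out a step the paper leaves implicit, but it is the same argument, not a different route.
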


\section{Decompositions of super MF modules} \label{sec:decomp}

%

In this section we verify the super MF condition of all modules of the form $V = V_0 \boxplus V_1$ with $V_0$ and $V_1$ nontrivial that are listed in Theorem~\ref{thm:proper-super}b) and c). By the result on subgraphs in Proposition~\ref{prop:subgraph} this also shows that some modules of Theorem~\ref{thm:proper-super}a) are super MF. We begin with a lemma that limits the possible cases of super MF modules.

\begin{lemma} \label{lemm:big-mama}
$\SL_n\otimes \SL_p \boxplus \SL_p \otimes \SL_m$ is super MF if and only if $p = 2$.
\end{lemma}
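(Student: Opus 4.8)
The statement concerns the super space $V = V_0 \boxplus V_1$ with $V_0 = \CC^n \otimes \CC^p$ (acted on by $\SL_n \times \SL_p$) and $V_1 = \CC^p \otimes \CC^m$ (acted on by $\SL_p \times \SL_m$), where the middle factor $\SL_p$ is shared. The saturated group is $\tilde{G} = (\CC^*)^2 \times \SL_n \times \SL_p \times \SL_m$. My approach is to examine the $\tilde G$-isotypic decomposition of $P(V) = S(V_0) \otimes \bigwedge(V_1)$ bidegree by bidegree and locate multiplicities. Using the bigraded summands $P^{(i,j)}(V) = S^i(\CC^n\otimes\CC^p) \otimes \bigwedge^j(\CC^p \otimes \CC^m)$, I would apply $(\GL,\GL)$ duality~\eqref{eqn:GLnGLm_duality} to the symmetric factor and skew duality~\eqref{eqn:GLnGLm_skew_duality} to the exterior factor. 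This yields
\[
P^{(i,j)}(V) = \Bigl(\bigoplus_{|\alpha|=i} V(\alpha)^{(n)} \otimes V(\alpha)^{(p)}\Bigr) \otimes \Bigl(\bigoplus_{|\beta|=j} V(\beta)^{(p)} \otimes V(\beta^t)^{(m)}\Bigr),
\]
so as an $\SL_p$-module the relevant piece is a tensor product $V(\alpha)^{(p)} \otimes V(\beta)^{(p)}$, while $\SL_n$ and $\SL_m$ act through the outer factors $V(\alpha)^{(n)}$ and $V(\beta^t)^{(m)}$ respectively. Since the outer factors pin down $\alpha$ (up to the $\SL_n$-equivalence) and $\beta$, a multiplicity in $P^{(i,j)}(V)$ as a $\tilde G$-module arises precisely when the tensor product $V(\alpha)^{(p)} \otimes V(\beta)^{(p)}$ fails to be multiplicity-free over $\SL_p$.

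\textbf{The sufficiency direction ($p=2$).}
When $p = 2$, the partitions $\alpha$ and $\beta$ have at most two rows, so $V(\alpha)^{(2)}$ and $V(\beta)^{(2)}$ are (twisted) irreducible $\SL_2$-representations, determined by a single highest weight. The tensor product of two irreducible $\SL_2$-modules is multiplicity-free by the Clebsch--Gordan rule. I would combine this with the fact, noted after Definition~\ref{def:superMF}, that the saturated torus action lets me separate the summands for distinct $(\alpha,\beta)$: distinct pairs give distinct $(\CC^*)^2$-weights through the bidegree $(i,j)$, and within a fixed pair the outer $\SL_n\times\SL_m$ factors are irreducible and fixed, leaving only the $\SL_2$ tensor product, which is MF. Hence every $P^{(i,j)}(V)$ decomposes multiplicity-free and $(G,V)$ is super MF.

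\textbf{The necessity direction ($p \geq 3$).}
For $p \geq 3$ I would produce a single explicit multiplicity by choosing small partitions. The natural candidate is $\alpha = \beta = (1)$, i.e. $i = j = 1$, giving the $\SL_p$-tensor product $\CC^p \otimes \CC^p \cong S^2\CC^p \oplus \bigwedge^2\CC^p$ — still MF — so this does not suffice and I must go one step higher. Taking $\alpha = \beta = (1,1)$ (requiring $p \geq 2$, with the tensor product $\bigwedge^2\CC^p \otimes \bigwedge^2\CC^p$ for $\SL_p$) or, more robustly, a pair where the Littlewood--Richardson expansion of $V(\alpha)^{(p)} \otimes V(\beta)^{(p)}$ contains a repeated constituent, exhibits the required multiplicity $\geq 2$. \textbf{The main obstacle} is precisely this: for $p \geq 3$ I must verify that \emph{some} such pair $(\alpha,\beta)$ with the outer factors still making sense (i.e. $\ell(\alpha) \leq \min\{n,p\}$, $\ell(\beta)\leq p$, $\beta_1 \leq m$) yields a genuine $\SL_p$-multiplicity — and check that the constraints from the outer groups do not accidentally force the two offending summands to differ as $\tilde G$-modules. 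The cleanest route is to reduce to the subgraph where $n$ and $m$ are as large as needed (legitimate by Proposition~\ref{prop:subgraph}, since shrinking $n$ or $m$ passes to a subdiagram) and then invoke a standard non-MF tensor-product computation for $\SL_p$ with $p \geq 3$, for instance that $V(2,1)^{(p)}$ appears with multiplicity two in $\CC^p \otimes V(1,1)^{(p)} \otimes$ (a suitable factor), which I would confirm via a short Littlewood--Richardson or character computation.
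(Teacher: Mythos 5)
Your setup and your $p=2$ direction follow the paper's argument exactly: apply duality~\eqref{eqn:GLnGLm_duality} to $S(V_0)$ and skew duality~\eqref{eqn:GLnGLm_skew_duality} to $\bigwedge(V_1)$, note that the outer $\SL_n$- and $\SL_m$-factors separate the pairs $(\alpha,\beta)$ within each bidegree, and reduce everything to multiplicity-freeness of the inner tensor products $V(\alpha)^{(p)}\otimes V(\beta)^{(p)}$; for $p=2$ Clebsch--Gordan finishes it. That part is correct.

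The necessity direction, however, has a genuine gap: every explicit witness you offer is in fact multiplicity-free, so your proof never exhibits a multiplicity. For $\alpha=\beta=(1,1)$ the dual Pieri rule gives
\begin{equation*}
\bigwedge\nolimits^2\CC^p \otimes \bigwedge\nolimits^2\CC^p \;=\; V(2,2)\oplus V(2,1,1)\oplus V(1,1,1,1)
\end{equation*}
(terms with more than $p$ rows deleted), all with multiplicity one; the same rule shows that \emph{any} pair in which one of $\alpha,\beta$ is a single column (or a single row) yields a multiplicity-free product, so no candidate of the kind you propose can work. Your fallback example is also broken: $\CC^p\otimes V(1,1)^{(p)}=V(2,1)\oplus V(1^3)$ is multiplicity-free, and a three-fold tensor product ``with a suitable factor'' cannot arise here anyway, since each bigraded piece $P^{(i|j)}$ contributes exactly the two-fold products $V(\alpha)^{(p)}\otimes V(\beta)^{(p)}$ sandwiched between irreducible outer factors. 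The smallest choice that actually works --- and the one the paper uses, in bidegree $(3|3)$ --- is $\alpha=\beta=(2,1)$: a short Littlewood--Richardson computation gives $c^{(3,2,1)}_{(2,1),(2,1)}=2$, so $V(2,1)^{(p)}\otimes V(2,1)^{(p)}\supseteq 2\cdot V(3,2,1)^{(p)}$, which is a nonzero module precisely when $p\geq 3$; both copies sit inside the same summand $V(2,1)^{(n)}\otimes\bigl(V(2,1)^{(p)}\otimes V(2,1)^{(p)}\bigr)\otimes V(2,1)^{(m)}$, so the outer factors agree and the multiplicity is a genuine $\tilde G$-multiplicity (the constraints $\ell(\alpha)\leq\min\{n,p\}$, $\beta_1\leq m$ hold since $n,m\geq 2$). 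Your reduction via Proposition~\ref{prop:subgraph} to large $n,m$ is legitimate but unnecessary once this pair is in hand.
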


\begin{proof} First, let $p = 2$. There is an integer $a$, depending on $l$ and $m$ such that the decomposition of $P^{(k|l)}$ is given by

\[ \left(\bigoplus_{i = 0,\dots, \lfloor \frac{k}{2} \rfloor} V(k - i, i)\otimes V(k - 2i)\otimes \CC \right) \otimes \left( \bigoplus_{j = a,\dots, \lfloor \frac{l}{2} \rfloor} \CC \otimes V(l - 2j)\otimes V(2^j, 1^{l - 2j}) \right). \]

The tensor products of the occuring representations for $\SL_2$ are multiplicity-free and the representations for $\SL_n$ and $\SL_m$ are pairwise different. Hence, the supersymmetric algebra is MF for $(\CC^*)^2\times G$. Note that this case refers to representation 1) of Theorem~\ref{thm:proper-super}c). \bigskip

If $p \geq 3$, $P^{(3|3)} = (V(2,1)\otimes V(2,1)\otimes\CC \oplus \dots ) \otimes (\CC\otimes V(2,1)\otimes V(2,1) \oplus \dots)$ is not multiplicity-free, since $V(2,1)\otimes V(2,1) = 2 \cdot V(3,2,1) \oplus \dots$ is not.
\end{proof}

In the following, it is convenient to use a different notation. For the irreducible representations of $\GL_n$ or $\SL_n$ we will sometimes write $\{\lambda\}$ instead of $V(\lambda)$. Classically, $\{\lambda\}$ refers to the character of $V(\lambda)$ if $\ell(\lambda) \leq n$. But this symbol even makes sense for $\ell(\lambda) > n$ if we set $\{\lambda\} = 0$ in this case. Similarly, we use the notations $[\lambda]$ and $\left<\lambda \right>$ for the groups $\SO_m$ (with $m = 2n$ or $2n+1$) and $\Sp_{2n}$. Also here, these symbols are well-defined even for $\ell(\lambda) \geq n$. But here, the modification rules are nontrivial and $[\lambda]$ (resp. $\left<\lambda \right>$) can refer to a virtual representation. For a rigorous definition of these {\it universal characters} see~\cite{KT}. \bigskip

One advantage of the universal characters is that the branching rules $\SL_m\downarrow \OG_m$ and $\SL_{2n}\downarrow \Sp_{2n}$ for irreducible representations can be formulated in a simple way. The corresponding formulas are well-known (and proven e.g. in~\cite{Li}).

\begin{prop}[Littlewood] \label{prop:univ-branching} For any partition $\lambda$ the following identities hold:

\begin{itemize}
\item[\rm a)] $\{ \lambda \} = \sum_{\mu \subseteq \lambda} \left( \sum_{\beta~even} c^\lambda_{\mu,\beta} \right) [\mu]$,

\item[\rm b)] $\{ \lambda \} = \sum_{\mu \subseteq \lambda} \left( \sum_{\beta^t~\!\!even} c^\lambda_{\mu,\beta} \right) \left< \mu \right>$.
\end{itemize} 

The numbers $c^\lambda_{\mu,\beta}$ are the Littlewood-Richardson coefficients. \hfill $\Box$
\end{prop}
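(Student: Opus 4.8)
The plan is to read both identities as equalities in the ring $\Lambda$ of symmetric functions, where, in the Koike--Terada formalism of~\cite{KT}, all three families of universal characters live: the $\GL$-character $\{\lambda\}$ is just the Schur function $s_\lambda$, and $[\mu]$, $\langle\mu\rangle$ are the orthogonal and symplectic universal characters. The key structural fact I would invoke is that a universal character is determined by its specialisations to finitely many variables, and that these specialisations coincide with the honest characters of $\GL_N$, $\OG_N$ and $\Sp_{2n}$ once the rank is large compared with $|\lambda|$ (the \emph{stable range}). Hence it suffices to verify the two branching formulas for the classical groups in the stable range and then lift the resulting identity to $\Lambda$.

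The combinatorial engine is the pair of Littlewood generating-function identities
\[ \sum_{\beta \text{ even}} s_\beta = \prod_{i\le j}(1-x_ix_j)^{-1}, \qquad \sum_{\beta^t \text{ even}} s_\beta = \prod_{i<j}(1-x_ix_j)^{-1}, \]
where ``$\beta$ even'' means all rows of $\beta$ have even length and ``$\beta^t$ even'' means all columns do. First I would establish these (for instance via the Robinson--Schensted correspondence, or by a direct expansion of the Weyl denominators). Conceptually the split between $i\le j$ and $i<j$ reflects the symmetric versus alternating nature of the invariant form, which is exactly what distinguishes the orthogonal case (even rows) in part a) from the symplectic case (even columns) in part b).

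The heart of the argument is then a character computation. Writing $s_\lambda = \sum_{\mu,\beta} c^\lambda_{\mu\beta}\, s_\mu\, s_\beta$ by iterated use of the Littlewood--Richardson rule, I would specialise the variables to the eigenvalue pattern of an orthogonal (resp.\ symplectic) element, namely $x_i^{\pm 1}$ (with an extra $1$ in the odd orthogonal case). At this specialisation the factor $\sum_{\beta \text{ even}} s_\beta$ (resp.\ $\sum_{\beta^t \text{ even}} s_\beta$) matches precisely the ratio between the $\GL$ Weyl denominator and the $\OG$ (resp.\ $\Sp$) one, so that each surviving $s_\mu$ is converted into the character $[\mu]$ (resp.\ $\langle\mu\rangle$). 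Reading off the coefficient of $[\mu]$ (resp.\ $\langle\mu\rangle$) then yields the multiplicity $\sum_{\beta \text{ even}} c^\lambda_{\mu\beta}$ (resp.\ $\sum_{\beta^t \text{ even}} c^\lambda_{\mu\beta}$), which is the claim in the stable range; note that no signs occur because this is the forward (restriction) direction. Passing back to $\Lambda$ by the universality remark of the first paragraph then gives both parts for all $\lambda$.

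The step I expect to be the main obstacle is not the stable-range identity itself but the clean passage to universal characters: one has to know that the determinantal definitions of $[\mu]$ and $\langle\mu\rangle$ in~\cite{KT} genuinely specialise to the stable-range characters, and that the nontrivial modification rules (needed when $\ell(\mu)$ exceeds the rank) are automatically absorbed once the identity is interpreted in $\Lambda$. This is exactly the point where the Koike--Terada machinery does the work, and it is the reason the statement can be attributed to~\cite{Li} rather than re-derived from scratch here.
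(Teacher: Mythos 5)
The paper itself does not prove Proposition~\ref{prop:univ-branching}: it is quoted as a classical result, with the proof delegated to Littlewood's book~\cite{Li} and the universal-character formalism of~\cite{KT}. So your sketch has to stand on its own, and while its outer frame is the standard one --- work in the universal character ring, verify the identities in the stable range, lift back; plus the two Littlewood series $\sum_{\beta~{\rm even}} s_\beta = \prod_{i\le j}(1-x_ix_j)^{-1}$ and $\sum_{\beta^t~{\rm even}} s_\beta = \prod_{i<j}(1-x_ix_j)^{-1}$ --- the step you call the heart of the argument fails as stated, for two separate reasons. First, the identity $s_\lambda = \sum_{\mu,\beta} c^\lambda_{\mu\beta}\, s_\mu s_\beta$ is false in a single alphabet: already for $\lambda=(1)$ the right-hand side equals $2s_{(1)}$. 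The Littlewood--Richardson rule gives the product $s_\mu s_\beta=\sum_\lambda c^\lambda_{\mu\beta}s_\lambda$, or the coproduct $s_\lambda(x,y)=\sum_{\mu,\beta}c^\lambda_{\mu\beta}s_\mu(x)s_\beta(y)$ in two \emph{independent} alphabets; an expansion of $s_\lambda$ of the form you wrote is essentially the proposition itself in disguise, so invoking it is circular. Second, the specialization you propose is not even defined: at the eigenvalue pattern $(x_1^{\pm1},\dots,x_n^{\pm1})$ of an orthogonal or symplectic element, the series $\prod_{i\le j}(1-x_ix_j)^{-1}$, resp.\ $\prod_{i<j}(1-x_ix_j)^{-1}$, contains the factors $(1-x_ix_i^{-1})^{-1}$ and diverges; moreover $s_\mu$ evaluated at these eigenvalues is \emph{not} $[\mu]$, resp.\ $\left<\mu\right>$ --- the discrepancy between the two is exactly what the proposition quantifies, so no specialization can "convert" one into the other for free.

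The repair is to keep the Littlewood series in a second, auxiliary alphabet $y$, never evaluated at the group element. The classical argument (Littlewood's, and the one underlying~\cite{KT}) pairs the Cauchy identity $\prod_{i,j}(1-x_iy_j)^{-1}=\sum_\lambda s_\lambda(x)s_\lambda(y)$ with the orthogonal Cauchy identity
\begin{equation*}
\sum_\mu [\mu](x)\,s_\mu(y)\;=\;\prod_{i\le j}(1-y_iy_j)\cdot\prod_{i,j}(1-x_iy_j)^{-1}
\end{equation*}
and its symplectic analogue (with $\prod_{i<j}(1-y_iy_j)$ and $\left<\mu\right>$); proving these by Weyl-denominator manipulation is the real content, and is what your remark about denominator ratios should be aimed at. Granting them, multiply through by $\sum_{\beta~{\rm even}}s_\beta(y)$ (resp.\ $\sum_{\beta^t~{\rm even}}s_\beta(y)$), apply the Littlewood--Richardson rule in its legitimate product form $s_\mu(y)s_\beta(y)=\sum_\lambda c^\lambda_{\mu\beta}s_\lambda(y)$, and compare coefficients of $s_\lambda(y)$ against the Cauchy identity to obtain $s_\lambda(x)=\sum_\mu\bigl(\sum_{\beta~{\rm even}}c^\lambda_{\mu\beta}\bigr)[\mu](x)$ and its symplectic counterpart. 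Your stable-range/lifting paragraph and the remark that no signs occur in the restriction direction are correct and can be kept as the outer shell of this corrected argument.
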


Two particular cases are of special interest: If $\lambda = (2^a, 1^b)$, we have for the restriction $\SL_m\downarrow\OG_m$ 

\begin{equation} \label{eqn:so-beispiel}
\{\lambda\} = \sum_{i = 0}^a [2^{a-i}, 1^{b-i}],
\end{equation}

while for $\SL_{2n}\downarrow \Sp_{2n}$ and $\lambda^t = (k, l)$ with $k \geq l$,

\begin{equation} \label{eqn:sp-beispiel}
\{\lambda\} = \sum_{i = 0}^l \left< k - i, l - i  \right>.
\end{equation}

Given an arbitrary $\lambda$, if we want to know which irreducible representations for $\OG_m$ and $\Sp_{2n}$ actually occur in the restriction of $V(\lambda)$, we must apply the modification rules to the terms on the right hand side of Proposition~\ref{prop:univ-branching} a) and b). However, in the special case~\eqref{eqn:sp-beispiel} for $\Sp_{2n}$ this is not necessary, because $\ell(k-i,l-i) \leq 2$ is inside the ``stable range''. For $\OG_m$ the modification rules for the special case~\eqref{eqn:so-beispiel} take the following form: Let $\mu = (2^c,1^d)$. If $\ell(\mu) > n$ then, depending on the parameter $h  = 2\ell(\mu) - m$, the modification rule has the following form

\begin{equation} \label{eqn:special-mod}
[\mu] = \optidr{[2^c,1^{d-h}]}{h\leq d}{[2^{c - (h - d - 1)},1^{h-d-2}]}{h>d+1}{0}{h=d+1}.
\end{equation}

The modification rules in the general case are described in~\cite{Ki1} (or either in~\cite{Pe}).

\begin{lemma} \label{lemm:proof_1}
Representations 2) $\Sp_{2n}\otimes \SL_2 \boxplus \SL_2\otimes\SL_m$, 3) $\SL_n\otimes \SL_2 \boxplus \SL_2\otimes\SO_{2m+1}$ and 4) $\Sp_{2n}\otimes \SL_2 \boxplus \SL_2\otimes\SO_{2m+1}$ of Theorem~\ref{thm:proper-super}c) are super MF. In particular, also Representations 2) $\SL_2 \boxplus \SL_2\otimes\SO_{2m+1}$ and 5) $\Sp_{2n}\otimes\SL_2 \boxplus \SL_2$ of Theorem~\ref{thm:proper-super}b) are super MF.
\end{lemma}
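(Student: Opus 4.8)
The plan is to establish the super MF property for the three representations of Theorem~\ref{thm:proper-super}c) directly by computing the isotypic decomposition of each homogeneous component $P^{(\mathbf{i},\mathbf{j})}(V)$ under the saturated torus action, using the same strategy as in Lemma~\ref{lemm:big-mama}. In each case the odd factor involving $\SL_2$ is $\SL_2\otimes\SL_m$ or $\SL_2\otimes\SO_{2m+1}$, whose exterior powers are governed by the $(\GL_2,\GL_m)$ skew duality~\eqref{eqn:GLnGLm_skew_duality}, together with the universal-character branching of Proposition~\ref{prop:univ-branching} to pass from $\SL_2$-tensor-$\SL_m$ representations down to $\SO_{2m+1}$ via~\eqref{eqn:so-beispiel} and~\eqref{eqn:special-mod}. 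The even factor $\Sp_{2n}\otimes\SL_2$ or $\SL_n\otimes\SL_2$ is handled by the ordinary $(\GL_n,\GL_2)$ duality~\eqref{eqn:GLnGLm_duality}, with the symplectic case requiring the branching~\eqref{eqn:sp-beispiel}. The crucial structural observation, exactly as in Lemma~\ref{lemm:big-mama}, is that the middle $\SL_2$ is shared between the even and odd halves of $V$, so that $P^{(k|l)}(V)$ factors as a product of an even piece carrying $\SL_2$-representations indexed by $k$ and an odd piece carrying $\SL_2$-representations indexed by $l$; one must then check that the resulting tensor products of $\SL_2$-modules remain multiplicity-free.

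First I would treat representation~3), $\SL_n\otimes\SL_2\boxplus\SL_2\otimes\SO_{2m+1}$, in detail, since it is the cleanest: the even half $S^k(\CC^n\otimes\CC^2)$ decomposes by~\eqref{eqn:GLnGLm_duality} as a sum over partitions $\lambda$ with $\ell(\lambda)\le 2$, giving terms $\{\lambda\}^{(n)}\otimes\{\lambda\}^{(2)}$, and the odd half $\bigwedge^l(\CC^2\otimes\CC^{2m+1})$ decomposes by~\eqref{eqn:GLnGLm_skew_duality} into $\{\mu\}^{(2)}\otimes\{\mu^t\}^{(2m+1)}$ with $\ell(\mu)\le 2$, after which~\eqref{eqn:so-beispiel} resolves $\{\mu^t\}$ into a sum of $\SO_{2m+1}$-universal characters and~\eqref{eqn:special-mod} applies the modification rules. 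The $\SL_2$-content is then a tensor product $V(k-2i)\otimes V(l-2j)$ of two $\SL_2$-modules, and I would invoke the Clebsch--Gordan rule to note these products are multiplicity-free; since $\SL_n$ and $\SO_{2m+1}$ labels are independent and pairwise distinct, the full component is MF under $(\CC^*)^2\times G$. The cases~2) and~4) replace $\SL_n$ by $\Sp_{2n}$, which only changes the even half: there I would use the symplectic branching~\eqref{eqn:sp-beispiel} to rewrite $\{\lambda\}^{(2n)}$ as $\SL_2\times\Sp_{2n}$-representations, noting that $\ell(\lambda)\le 2$ keeps everything in the stable range so no modification rules are needed on the $\Sp_{2n}$ side.

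For the ``in particular'' clause, I would simply appeal to Proposition~\ref{prop:subgraph}: representation~2) of Theorem~\ref{thm:proper-super}b), namely $\SL_2\boxplus\SL_2\otimes\SO_{2m+1}$, is the subgraph of the case~3) (or~4)) diagram obtained by deleting the $\SL_n$ (resp.\ $\Sp_{2n}$) vertex and the edge to it, leaving a single even $\SL_2$-module; and representation~5), $\Sp_{2n}\otimes\SL_2\boxplus\SL_2$, is the subgraph obtained from case~2) or~4) by deleting the $\SO_{2m+1}$ vertex, leaving a single odd $\SL_2$-module. Since a subgraph of a super MF diagram is again super MF, these follow at once.

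The main obstacle I expect is bookkeeping on the $\SO_{2m+1}$ side: when $\ell(\mu)$ exceeds the stable range one must apply~\eqref{eqn:special-mod}, and one has to confirm that the modification rules do not collapse two distinct universal characters onto the same genuine irreducible $\SO_{2m+1}$-representation with a combined multiplicity exceeding one. In other words, the delicate point is verifying that after applying the branching~\eqref{eqn:so-beispiel} and the modification~\eqref{eqn:special-mod} across all admissible $j$ (and all $i$ on the even side), the surviving $\SO_{2m+1}$-labels attached to a fixed $\SL_2$-weight remain distinct. Because the $\SL_2$-weight $l-2j$ pins down $j$ uniquely once it is nonzero, and the skew duality already guarantees multiplicity-freeness of the $\{\mu\}^{(2)}\otimes\{\mu^t\}^{(2m+1)}$ sum before branching, I anticipate that the saturated $(\CC^*)^2$-grading separates all the pieces and prevents any such collision; confirming this separation is the one place where genuine case-checking, rather than a formal reduction, is required.
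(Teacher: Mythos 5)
Your overall route coincides with the paper's: decompose each half by \eqref{eqn:GLnGLm_duality} and \eqref{eqn:GLnGLm_skew_duality}, branch to $\Sp_{2n}$ resp.\ $\SO_{2m+1}$ via Proposition~\ref{prop:univ-branching}, handle the shared $\SL_2$ by Clebsch--Gordan, and obtain the ``in particular'' clause from Proposition~\ref{prop:subgraph}; your treatment of the even half (two-row partitions stay in the stable range, labels pairwise distinct) and of case 2) is sound. The genuine gap is in the $\SO_{2m+1}$ step, and it is not merely a deferred computation: you have identified the wrong condition to check. You propose to verify that ``the surviving $\SO_{2m+1}$-labels attached to a fixed $\SL_2$-weight remain distinct'', on the grounds that the weight $l-2j$ pins down $j$. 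But precisely because the middle $\SL_2$ is shared, in $P^{(k|l)}=S^k(V_0)\otimes\bigwedge^l(V_1)$ the odd weight $l-2j$ is not visible in the final decomposition: it enters only through the Clebsch--Gordan product $\{k-2i\}\otimes\{l-2j\}$, whose constituents do not determine $j$. Indeed, for $j\neq j'$ the weights $l-2j$ and $l-2j'$ have equal parity, so as soon as $k-2i\geq\max(l-2j,\,l-2j')$ the products $\{k-2i\}\otimes\{l-2j\}$ and $\{k-2i\}\otimes\{l-2j'\}$ share constituents. Hence, if a single irreducible $\SO_{2m+1}$-module $C$ survived branching-plus-modification for two distinct values of $j$, then $A\otimes B\otimes C$ would occur with multiplicity two in $P^{(k|l)}$ for suitable $k$ (with $A$ the $\Sp_{2n}$- or $\SL_n$-label, which fixes $i$, and $B$ a common Clebsch--Gordan constituent), even though your fixed-weight criterion is satisfied. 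What must actually be proven is the global statement: the $\SO_{2m+1}$-constituents of $\bigwedge^l(\SL_2\otimes\SO_{2m+1})$ are pairwise distinct across \emph{all} $j$ simultaneously.

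This global distinctness is exactly where the modification rules bite, and it is the real content of the paper's proof, absent from your proposal. The paper writes the full orthogonal branching of $\bigwedge^l(\SL_2\otimes\SO_{2m+1})$ as a multiplicity-free sum of universal characters $\sum_{\tau\in I}[\tau]$ and partitions $I$ into classes $\mathcal{S}_q$ according to the number of boxes in the second column of $\tau$. Since the relevant modification rule \eqref{eqn:special-mod} shortens only the first column, a collision can only occur inside a single class $\mathcal{S}_q$; within $\mathcal{S}_q$ the first-column lengths are pairwise distinct and differ by at least $2$, so two modified characters cannot become equal; and a parity invariant (the numbers $\lambda^t_1-\lambda^t_2$ are all even or all odd) rules out a modified character colliding with an unmodified one. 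Replacing your ``$j$ is pinned down'' heuristic by this (or an equivalent) argument would complete the proof; as written, the decisive step is both missing and aimed at a condition too weak to yield the lemma.
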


\begin{proof} We show that all the $\Sp_{2n}$ (resp. $\SO_{2m+1}$) representations in the decomposition of $S^k(\Sp_{2n}\otimes\SL_2)$, resp. $\bigwedge^l(\SL_2\otimes\SO_{2m+1}$) are pairwise different. This suffices to prove the claim for the representations 2), 3) and 4) of Theorem~\ref{thm:proper-super}. First, by the skew duality ~\eqref{eqn:GLnGLm_skew_duality} and the branching rule for $\Sp_{2n}$ we have for any $k$

\begin{equation} \label{eqn:sp_good-decomposition}
S^k(\Sp_{2n}\otimes \SL_2) = \bigoplus_{i = 0,\dots, \lfloor \frac{k}{2} \rfloor} \left( \bigoplus_{j = 0,\dots, i} \left< k  - i - j, i - j \right> \right)\otimes \{ k - 2i \}
\end{equation}

and so the claim is obvious. For $\SO_{2m+1}$ the statement is not immediate since we have to deal with modification rules in that case. Let $0\leq l \leq 2(2n + 1)$, so

\begin{equation} \label{eqn:so_good-decomposition}
\bigwedge^l \SL_2\otimes \SO_{2n+1} = \bigoplus_{\lambda\in\mathcal{P}_l} \{\lambda \} \otimes \{ \lambda^t \},
\end{equation}

where $\mathcal{P}_l = \{ \lambda: |\lambda|=l, \ell(\lambda)\leq 2, \lambda_1\leq 2n + 1 \} = \{ (a, l - a), (a - 1, l - a + 1), \dots, (b, l - b) \}$ for some integers $a \geq b \geq 0$. Thus, we have to show that $M_l := \bigoplus_{\lambda\in\mathcal{P}_l} {\rm Res}^{\SL_{2n+1}}_{\SO_{2n+1}}(V_\lambda)$ is multiplicity-free. If we choose an integer $a \geq s \geq b$, we have for $\lambda = (s, l - s) \in \mathcal{P}_l$ by the branching rule for universal characters

\[ \{ \lambda^t \} = [(s, l - s)^t] + [s( - 1, l - s - 1)^t] + \dots + [(2s - l + 1, 1)^t] + [(2s - l, 0)^t]. \] 

This results in a multiplicity-free decomposition

\[ M_l = \sum_{s = a,\dots,b} \sum_{\nu = 0, \dots, l - s} [s - \nu, l - s - \nu] =: \sum_{\tau\in I}[\tau]. \]

In order to analyze the impact of the modification rules we re-order this sum by setting $\mathcal{S}_q = \{ \tau\in I: {\rm ~second~column~of~\tau~has~exactly~} q {\rm ~boxes} \}$, so that
\begin{equation} 
\sum_{\tau\in I}[\tau] = \sum_{q\in\NN_0} \sum_{\tau\in\mathcal{S}_q} [\tau].
\end{equation}

It suffices to consider only partitions $\tau \in I$ with $\ell(\tau) > n$ which need at most one strip removal and that affects only the first column. First, note that for every $q\in\NN_0$ there exist $p,r\in\NN_0$ such that

\[ \mathcal{S}_q = \{ (p+q,q)^t, (p+q+2,q)^t, \dots, (p+q+2r,q)^t \}.\] 

Since all the numbers $\{ \ell(\lambda) : \lambda\in\mathcal{S}_q \}$ are pairwise different and differ by at least $2$, it cannot happen that two different elements in $\mathcal{S}_q$ become equal after modification. Observe further that the numbers $\{ \lambda^t_1 - \lambda^t_2: \lambda \in \mathcal{P}_i \}$ are either all even or all odd. So, modified elements from $\mathcal{S}_q$ cannot equal any unmodified elements  in $\mathcal{S}_q$, too.
\end{proof}

\begin{lemma} \label{lemm:proof_3}
Representations 1) $\SL_n \boxplus \SL_n \otimes \SL_m$ and 3) $\SL_n\otimes \SL_m \boxplus \SL_m$ of Theorem~\ref{thm:proper-super}b) are super MF. In particular, representation 5) of Theorem~\ref{thm:proper-super}a) is super MF.
\end{lemma}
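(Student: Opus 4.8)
The plan is to verify Definition~\ref{def:superMF} directly. By the saturation principle it suffices to show that, for each bidegree $(a,b)$, the homogeneous component of $P(V)$ is multiplicity-free as a $\GL_n\times\GL_m$-module: the two $\CC^*$-factors of $\tilde G$ record the bidegree, so summands in different bidegrees carry different $\tilde G$-central characters, and within a fixed bidegree the total number of boxes in each $\GL$-factor is determined, whence multiplicity-freeness as a $\GL_n\times\GL_m$-module is equivalent to the condition of Definition~\ref{def:superMF}. For representation~1), where $V_0=\CC^n$ and $V_1=\CC^n\otimes\CC^m$, the relevant component is $S^a\CC^n\otimes\bigwedge^b(\CC^n\otimes\CC^m)$; for representation~3), where $V_0=\CC^n\otimes\CC^m$ and $V_1=\CC^m$, it is $S^a(\CC^n\otimes\CC^m)\otimes\bigwedge^b\CC^m$. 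In each case I would first expand the tensor-product factor by the appropriate $(\GL_n,\GL_m)$-duality from Section~\ref{sec:prelim}.

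For representation~1), skew duality~\eqref{eqn:GLnGLm_skew_duality} gives
\[
\bigwedge^b(\CC^n\otimes\CC^m)=\bigoplus_{\mu}V(\mu)^{(n)}\otimes V(\mu^t)^{(m)},
\]
the sum over $|\mu|=b$, $\ell(\mu)\le n$, $\mu_1\le m$. Since $S^a\CC^n=V(a)^{(n)}$, the component becomes $\bigoplus_\mu\big(V(a)^{(n)}\otimes V(\mu)^{(n)}\big)\otimes V(\mu^t)^{(m)}$, and I would decompose $V(a)^{(n)}\otimes V(\mu)^{(n)}$ by Pieri's rule, obtaining $\bigoplus_\nu V(\nu)^{(n)}$ with $\nu/\mu$ a horizontal $a$-strip, each occurring once (terms with $\ell(\nu)>n$ simply vanish under the convention $\{\nu\}=0$ and can only drop summands). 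The key point is that the $\GL_m$-factor $V(\mu^t)^{(m)}$ recovers $\mu$, so distinct $\mu$ contribute pairwise non-isomorphic summands; for fixed $\mu$ the shapes $\nu$ are distinct and appear with multiplicity one. Hence no $\GL_n\times\GL_m$-type repeats, and representation~1) is super MF.

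Representation~3) is handled symmetrically: symmetric duality~\eqref{eqn:GLnGLm_duality} gives $S^a(\CC^n\otimes\CC^m)=\bigoplus_\lambda V(\lambda)^{(n)}\otimes V(\lambda)^{(m)}$, and with $\bigwedge^b\CC^m=V(1^b)^{(m)}$ the component is $\bigoplus_\lambda V(\lambda)^{(n)}\otimes\big(V(\lambda)^{(m)}\otimes V(1^b)^{(m)}\big)$. The Pieri rule for exterior powers now expands $V(\lambda)^{(m)}\otimes V(1^b)^{(m)}=\bigoplus_\rho V(\rho)^{(m)}$ over vertical $b$-strips $\rho/\lambda$, again multiplicity-free. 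Here the separating label is the $\GL_n$-factor $V(\lambda)^{(n)}$, which pins down $\lambda$, after which the $\rho$ are distinct and occur once; so this component is multiplicity-free as well. The asserted special case of Theorem~\ref{thm:proper-super}a) then follows from Proposition~\ref{prop:subgraph}, since it arises as a subdiagram (formally, the degeneration $m=1$, respectively $n=1$, deletes a trivially acting simple factor).

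The argument is essentially bookkeeping, so the only point requiring care — and the one I would flag as the crux — is the collision analysis: one must check that within a fixed bidegree the pairs of partitions indexing the summands are genuinely distinct. This is exactly where the multiplicity-freeness of Pieri's rule and the fact that one of the two $\GL$-factors faithfully encodes the duality partition do the work, since equality of $\GL_n\times\GL_m$-types forces equality of both the duality partition and the Pieri-added shape.
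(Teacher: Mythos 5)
Your proposal is correct and takes essentially the same route as the paper's proof: expand the tensor-product factor by $(\GL_n,\GL_m)$ (skew) duality, note that the remaining tensor product is multiplicity-free by the Pieri rule, and separate the summands by the partition recorded in the other $\GL$-factor (the paper states this in one line and disposes of representation 3) with ``the same argument applies''). One minor point: the special case of Theorem~\ref{thm:proper-super}a) obtained here as a subdiagram is the representation $\CC^n \boxplus \CC^n$ of $\SL_n$ --- the label ``5)'' in the lemma statement appears to be a misprint for ``1)'' --- and your treatment of it via Proposition~\ref{prop:subgraph} matches the paper's intent.
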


\begin{proof} By the Pieri rule, all direct summands on the right hand side of 

\[ P^{(k|l)}(\SL_n \boxplus \SL_n \otimes \SL_m)  = \bigoplus_{|\lambda| = l,~ \ell(\lambda) \leq n,~ \lambda_1 \leq m} V(k)^{(n)} \otimes V(\lambda)^{(n)} \otimes V(\lambda^t)^{(m)} \] 

are multiplicity-free. They can be distinguished by the representations of the $\SL_m$ factor, so {\it 1)} is super MF. 
The same argument applies to {\it 3)}. \end{proof}

\begin{lemma} \label{lemm:proof_4}
Representations 4) $\SL_2 \otimes \SL_n \boxplus S^2\SL_2$ and 6) $\SL_2\otimes \Sp_{2n} \boxplus S^2\SL_2$ of Theorem~\ref{thm:proper-super}b) are super MF. In particular, representation 2) $SL_2\boxplus S^2\SL_2$ of Theorem~\ref{thm:proper-super}a) is super MF.
\end{lemma}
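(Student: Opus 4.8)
The plan is to check Definition~\ref{def:superMF} directly, bidegree by bidegree. Write the even and odd parts as $V_0=\CC^2\otimes\CC^n$ (respectively $\CC^2\otimes\CC^{2n}$ for 6)) and $V_1=S^2\CC^2$, so that $P^{(k|l)}(V)=S^k(V_0)\otimes\bigwedge^l(V_1)$. By the discussion following Definition~\ref{def:superMF} it suffices to show that each such $P^{(k|l)}$ is multiplicity-free as a $G$-module, since the two extra torus factors of the saturated action merely record the bidegree $(k,l)$ and so separate the different components. The decisive simplification is that the exterior algebra on the odd part is tiny: as $\dim S^2\CC^2=3$ we have $\bigwedge^l(S^2\CC^2)=0$ for $l>3$, and a weight count shows that, as $\SL_2$-modules, $\bigwedge^0\cong\bigwedge^3\cong\CC$ and $\bigwedge^1\cong\bigwedge^2\cong S^2\CC^2=V(2)$. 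In particular $\bigwedge^l(V_1)$ only ever involves the $\SL_2$-factor of $G$.

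Hence only two types of component occur. For $l\in\{0,3\}$ we have $P^{(k|l)}\cong S^k(V_0)$, which is multiplicity-free because $(G,V_0)$ is one of the classical multiplicity-free spaces of Theorem~\ref{thm:symmetric} (namely $\SL_2\otimes\SL_n$, resp.\ $\SL_2\otimes\Sp_{2n}$). For $l\in\{1,2\}$ we have $P^{(k|l)}\cong S^k(V_0)\otimes V(2)$, with $V(2)$ acting only on the $\SL_2$-tensor factor. The heart of the argument is therefore to see that this tensor product stays multiplicity-free. To this end I would record the following structural feature of $S^k(V_0)$: every irreducible constituent has the shape $V(a)\otimes R$, where $V(a)$ is an $\SL_2$-module and $R$ an irreducible of the second simple factor, and the assignment (constituent)$\mapsto R$ is \emph{injective}. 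For representation 4), $(\GL_2,\GL_n)$-duality~\eqref{eqn:GLnGLm_duality} gives the constituents $V(\lambda_1-\lambda_2)\otimes V(\lambda_1,\lambda_2)^{(n)}$ indexed by two-row partitions $\lambda=(\lambda_1,\lambda_2)$ of $k$, and distinct $\lambda$ plainly give distinct $\SL_n$-modules, so injectivity is immediate. Granting injectivity, tensoring with $V(2)$ acts purely on the $\SL_2$-label by Clebsch--Gordan, $V(a)\otimes V(2)=V(a+2)\oplus V(a)\oplus V(a-2)$, which is multiplicity-free; and since distinct constituents already carry distinct companion factors $R$, no two of the resulting summands can coincide. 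Thus $P^{(k|l)}$ is multiplicity-free.

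The main obstacle is representation 6), where the second factor is symplectic and one must control a genuine branching. Here I would first apply $(\GL_2,\GL_{2n})$-duality and then restrict $\GL_{2n}\downarrow\Sp_{2n}$ via Littlewood's rule (Proposition~\ref{prop:univ-branching}b): for a two-row $\lambda$ only the even-column partitions $\beta=(m,m)$ contribute, yielding $V(\lambda_1,\lambda_2)\!\downarrow_{\Sp_{2n}}=\bigoplus_{m=0}^{\lambda_2}\langle\lambda_1-m,\lambda_2-m\rangle$ (no modification rules intervene, since $\ell\leq 2\leq n$ lies in the stable range). The delicate point is that the companion factors $\langle\lambda_1-m,\lambda_2-m\rangle$ remain pairwise distinct over all $(\lambda,m)$: from $\langle\alpha,\beta\rangle$ one recovers $\lambda_1-\lambda_2=\alpha-\beta$, and then $m$ from $\alpha+\beta=k-2m$, so $(\lambda,m)$ is determined. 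This gives both the injectivity needed above and the multiplicity-freeness of $S^k(V_0)$ itself, and the Clebsch--Gordan argument then finishes 6) exactly as for 4).

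Finally, representation 2) ($\SL_2\boxplus S^2\SL_2$) is the subdiagram of 4) obtained by deleting the $\SL_n$-vertex, and hence super MF by Proposition~\ref{prop:subgraph}; equivalently it is the case $n=1$ of the computation above, where $S^k(\CC^2)=V(k)$ and $V(k)\otimes V(2)$ is manifestly multiplicity-free.
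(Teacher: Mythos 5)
Your proof is correct and takes essentially the same route as the paper: the crucial observation that $\bigwedge^l(S^2\CC^2)$ is either trivial or $V(2)$, combined with the fact that the constituents of $S^k(V_0)$ carry pairwise distinct labels for the second factor (your symplectic branching computation reproduces exactly the paper's formula~\eqref{eqn:sp_good-decomposition}, which the paper's proof simply cites), followed by Clebsch--Gordan on the $\SL_2$ part. The only difference is one of exposition: you spell out the duality, branching, and injectivity details that the paper's two-sentence proof leaves implicit.
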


\begin{proof} The exterior powers of $S^2\SL_2$ are either trivial, or isomorphic to $V(2)$. Hence $P(\SL_2 \otimes \SL_n \boxplus S^2\SL_2)$ is multiplicity-free. The same is true for $P(\SL_2 \otimes \Sp_{2n} \boxplus S^2\SL_2)$ by~\eqref{eqn:sp_good-decomposition}. \end{proof}

\section{Representations of simple groups} \label{sec:simple}


In this section, $G$ is a simple group. We show that Theorem~\ref{thm:proper-super}a) contains all super MF representations of such $G$. In particular, we have to verify the super MF condition for representations {\it 3)} - {\it 5)} of this theorem. Since their representation diagrams do not occur as subdiagrams of bigger super MF representations, this has not been done yet. \bigskip

So let $V = V_0 \boxplus V_1$ be a proper super space, with $V_0$ coming from Theorem~\ref{thm:symmetric}a) and $V_1$ from Theorem~\ref{thm:skew-symmetric}a). For each root type of $G$ there is given one table with the different choices for $V_0$ along the rows and for $V_1$ along the columns. (A $\bigstar$ indicates that the ranks of the groups that act on $V_0$ and $V_1$ cannot be equal.) This also yields the proof that representations {\it 3)} - {\it 5)} of Theorem~\ref{thm:proper-super}a) are super MF. In type $A$ we often use formulas~\eqref{eqn:S2-sym} - \eqref{eqn:L2-skew}.

\[ \begin{array}{| l | c | c | c | c | c | c | c | c | c |} \hline
{\rm Type~} {\bf A}&\SL_n &S^2\SL_n &\bigwedge^2\SL_n &S^3\SL_2 & S^4\SL_2 & S^5\SL_2 & S^6\SL_2 & S^3\SL_3 & \bigwedge^3\SL_6 \\ \hline
\SL_n  &  1a) & 1b)     & 1c)             & 1d)     & 1e)      & 1f)      & 1g)      & 1h)      & 1i)              \\ \hline
S^2\SL_n& 2a) & 2b)     & 2c)             & 2d)     & 2e)      & 2f)      & 2g)      & 2h)      & 2i)              \\ \hline
\bigwedge^2\SL_n & 3a) & 3b)  & 3c)       & \bigstar& \bigstar & \bigstar & \bigstar & \bigstar & 3i)              \\ \hline
\end{array} \] \bigskip

\begin{itemize} \setlength{\itemsep}{8pt}

\item[1a)] This is representation {\it 1)} of Theorem~\ref{thm:proper-super}a).

\item[1b)] $n \geq 4$: $P^{(2|3)}(\SL_n\boxplus S^2\SL_n) = \{2\}\cdot (\{4,1^2\} + \{3^2\}) = 2\cdot \{4,3,1\} + \dots$ 

\item[1c)] $n \geq 5$: $P^{(2|5)} = \{2\}\cdot (\{4,2^2,1^2\} + \{3^2,2^2\} + \dots ) = 2 \cdot \{4,3,2^2,1\} + \dots$ \bigskip

\noindent $n = 4$: $\bigwedge^l(\bigwedge^2\SL_4)$ is irreducible, except for $l = 3$. And in this case, $P^{(k|3)}$ decomposes as

\[ \{k\}\!\cdot\!(\{2\}\!+\!\{2^3\}) = \{k\!+\!2\} + \{k\!+\!1, 1\} + \{k, 2\} + \{k\!+\!2, 2^2\} + \{k, 1^2\} + \{k\!-\!2\}. \]

\noindent In particular, representation {\it 3)} of Theorem~\ref{thm:proper-super}a) is super MF.

\item[1d) - 1g)] It follows from~\eqref{eqn:Sk-plethysms} that for $p = 3, \dots, 6$ we have

\noindent $P^{(2|2)}(\SL_2 \boxplus S^p \SL_2) = \{2\}\cdot (\sum_{k=0}^{\lfloor\frac{p}{2}\rfloor} \{2p - 4k\}) = 2 \cdot \{2p - 2\} + \dots$

\item[1h)] $P^{(2|2)}(\SL_3 \boxplus S^3\SL_3) = \{2\} \cdot (\{3^2\} + \{5,1\}) = 2\cdot \{5,3\} + \dots$

\item[1i)] $P^{(2|5)}(\CC^6 \boxplus \bigwedge^3 \SL_6) = \{2\} \cdot (\{1^3\} + \{5,3^2,2^2\} + \dots) = 2 \cdot \{3,1^2\} + \dots$

\item[2a)] It follows from~\eqref{eqn:S2-sym} that for $k \in \NN$ and $0 \leq l \leq n$

\[ P^{(k|l)}(S^2\SL_n\boxplus\SL_n) = \left( \bigoplus_{|\lambda|=2k,~\!\lambda_i~\!even} V(\lambda) \right)\otimes V(1^l). \]

In particular, $\lambda_i \equiv \lambda'_i~{\rm mod}~2$ for all occuring partitions $\lambda$, $\lambda'$. By the Pieri-Rule (see e.g. \cite[p. 388]{GW}), the decomposition of $V(\lambda)\otimes V(1^l)$ is a direct sum of $V(\mu)$ where $\mu$ is obtained from $\lambda$ by adding $l$ boxes, but no two in the same row. So, $V(\lambda)\otimes V(1^l)$ and $V(\lambda')\otimes V(1^l)$ share no isomorphic submodule if $\lambda \neq \lambda'$. (Even after considering modification rule for rows with $n$ boxes.) Thus, representation {\it 4)} of Theorem~\ref{thm:proper-super}a) is super MF.

\item[2b)] $n\geq 2$: $P^{(2|1)}(S^2\SL_n \boxplus S^2\SL_n) = (\{2^2\} + \{4\}) \cdot \{2\} = 2 \cdot \{4,2\} + \dots$

\item[2c)] $n \geq 4$: $P^{(2|2)}(S^2\SL_n \boxplus \bigwedge^2\SL_n) = (\{2^2\} + \{4\}) \cdot \{2,1^2\} = 2 \cdot \{4,2,1^2\} + \dots$.

\item[2d) - 2i)] $P^{(1|k)}(S^2\SL_n \boxplus V) = P^{(2|k)}(\SL_n \boxplus V)$. See 1d) - 1i). 

\item[3a)] $n \geq 4$: $P^{(3|2)}(\bigwedge^2\SL_n \boxplus \SL_n) = (\{3^2\} + \{2^2,1^2\} + \dots)\cdot \{1^2\} = 2 \cdot \{ 3^2, 1^2 \} + \dots$

\item[3b)] $n \geq 3$: $P^{(5|3)}(\bigwedge^2 \SL_n \boxplus S^2\SL_n)= \{5^2\} \cdot (\{4,1^2\} + \{3^2\}) = 2\cdot \{8,6,2\}+ \dots$

\item[3c)] $n\geq 4$: $P^{(6|3)} = (\{4^2,2^2\} + \dots)\cdot(\{3,1^3\} + \{2^3\} + \dots) = 2\cdot\{6,5,4,3\} + \dots$

\item[3i)] $P^{(1|3)}(\bigwedge^2 \SL_6 \boxplus \bigwedge^3 \SL_6) = \{1^2\}\cdot (\{1^3\}+ \{3^2,1^3\} + \dots) = 2 \cdot \{2^2,1\} + \dots$

\end{itemize} \bigskip

\[ \begin{array}{| l | c | c | c |}  \hline
{\rm Type~} {\bf C}& \Sp_{2n} & \bigwedge^2_0\Sp_4 & \bigwedge^3_0\Sp_6 \\ \hline
\Sp_{2n}           & 1a)      & 1b)                & 1c)                \\ \hline
\bigwedge^2_0\Sp_4 & 2a)      & 2b)                & \bigstar           \\ \hline
\end{array} \]

\begin{itemize} \setlength{\itemsep}{8pt}

\item[1a)] $P^{(2|2)} = \left< 2 \right> \cdot \left( \left< 0 \right> + \left< 1^2\right>\right) = 2 \cdot \left<2\right> + \left<2,1^2 \right> + \left<1,1\right> + \left<3,1\right>$ 

\item[1b)] $P^{(k|1)} = P^{(k|4)} = \left<k\right> \cdot \left<1^2\right> = \left<k+1,1\right> + \left<k-1,1 \right> + \left<k \right>$, and
$P^{(k|2)} = P^{(k|3)} = \left<k+2\right> + \left<k+1,1\right> + \left<k\right>  +  \left<k,2\right> + \left<k-1,1\right> + \left<k-2\right>$.

\noindent In particular, representation {\it 5)} of Theorem~\ref{thm:proper-super}a) is super MF.

\item[1c)] $P^{(2|3)} = \left<2\right>\cdot \left(\left<1^3\right> + \left<3,2\right>\right) = 2\cdot \left<3,2\right>+\dots$

\item[2a)] $P^{(2|2)} = \left(\left<0\right> + \left<2^2\right>\right) \cdot \left(\left<1^2\right> + \left<0\right>\right) = 2 \cdot \left<1^2\right> + \dots$ 

\item[2b)] $P^{(2|1)} = \left(\left<0\right> + \left<2^2\right> \right) \cdot \left<1^2\right> = 2\cdot\left<1^2\right> + \left<3^2\right> + \left<3,1\right>$ 

\end{itemize}

\bigskip

For the remaining types of groups $B$, $D$, $E$ and $G$ we shall mainly label the highest weights relative to the basis of fundamental weights of the corresponding group. 

\[ \begin{array}{| l | c | c | c |}                        \hline
{\rm Type~} {\bf B} & \SO_{2n+1} & \Delta_7 & \Delta_9  \\ \hline
\SO_{2n+1}          & 1a)        & 1b)      & 1c)       \\ \hline
\Delta_7            & 2a)        & 2b)      & \bigstar  \\ \hline
\Delta_9            & 3a)        & \bigstar & 2c)       \\ \hline
\end{array} \]

\begin{itemize} \setlength{\itemsep}{8pt}

\item[1a)] $P^{(2|2)} = ([0] + [2])\cdot [1^2] = 2\cdot[1^2] + [2,1^2] + [2] + [3,1]$ 

\item[1b)] $P^{(2|3)} = ((2,0,0) + (0,0,0)) \cdot ((0,0,1) + (1,0,1)) = 2\cdot (1,0,1) + \dots$ 

\item[1c)] $P^{(2|3)} = ((2,0,0,0) + (0,0,0,0)) \cdot ((0,1,0,1) + (1,0,0,1)) = 2\cdot (1,0,0,1) +\dots$

\item[2a)] $P^{(3|2)} = ((0,0,1) + (0,0,3))\cdot (0,1,0) = 2\cdot(0,1,1) + \dots$

\item[2b)] $P^{(2|3)} = ((0,0,2) + \dots)\cdot ((1,0,1) + \dots) = 2\cdot (1,0,1) + \dots$ 

\item[3a)] $P^{(3|2)} = ((0,0,0,1) + (0,0,0,3))\cdot (0,1,0,0) = 2\cdot (0,1,0,1) + \dots$ 

\item[3c)] $P^{(2|3)} = ((0,0,0,2) + \dots) \cdot ((1,0,0,1) + \dots) = 2\cdot (1,0,0,1) + \dots$

\end{itemize}

\bigskip

For the half spin representations we use the convention $\Delta_{2n}^+ = V_{\omega_{n-1}}$ and $\Delta_{2n}^- = V_{\omega_n}$. Note that $(\Delta_{10}^+)^* = \Delta_{10}^-$ while $\Delta_8^{\pm}$ and $\Delta_{12}^{\pm}$ are self-dual.

\[ \begin{array}{| c | c | c | c | c | c |}                                                       \hline
{\rm Type~}{\bf D} & \SO_{2n}  & \Delta_8^+ & \Delta_{10}^{+} & \Delta_{12}^+  & \Delta_{12}^- \\ \hline
\SO_{2n}           & 1a)       & 1b)        & 1c)             & 1d)            & 1e)           \\ \hline
\Delta_8^+         & 2a)       & 2b)        & \bigstar        & \bigstar       & \bigstar      \\ \hline
\Delta_{10}^{+}    & 3a)       & \bigstar   & 3b)             & \bigstar       & \bigstar      \\ \hline
\end{array} \]

\begin{itemize} \setlength{\itemsep}{8pt}

\item[1a)] is not super MF, since it has the same decomposition as B 1a).

\item[1b)] $P^{(3|4)} = ((1,0,0,0) + (3,0,0,0)) \cdot ((2,0,0,0) + \dots) = 2 \cdot\!(1,0,0,0) + \dots$

\item[1c)] $P^{(2|5)} = ((2,0,0,0,0) + \dots)\cdot ((1,1,0,0,1) + \dots) = 2\cdot (1,1,0,0,1) + \dots$

\item[1d)] $P^{(2|5)} = ((2,0,0,0,0,0) + \dots) \!\cdot\! ((0,1,1,0,0,1) + \dots) = 2 \cdot\! (0,1,1,0,0,1) + \dots$ 

\item[1e)] $P^{(2|5)} = ((2,0,0,0,0,0) + \dots) \!\cdot\! ((0,1,1,0,1,0) + \dots) = 2 \cdot\! (0,1,1,0,1,0) + \dots$ 

\item[2a)] $P^{(3|4)} = ((0,0,1,0) + (0,0,3,0)) \cdot ((0,0,2,0) + \dots) = 2\cdot (0,0,1,0) + \dots$

\item[2b)] $\Delta_8^+ \boxplus \Delta_8^+$ is geometrically equivalent to $\CC^8 \boxplus \CC^8$.

\item[3a)] $P^{(3|3)} = ((1,0,0,1,0) + \dots) \cdot((0,0,1,0,0) + \dots) = 2 \cdot (0,1,0,1,0) + \dots$ 

\item[3b)] $P^{(3|2)} = ((1,0,0,1,0) + \dots) \cdot (0,0,1,0,0) = 2 \cdot (1,0,0,0,1) + \dots$

\end{itemize}

The only exceptional groups that admit both symmetric MF {\it and} skew MF representations are $G_2$ and $E_6$. But $G_2 \boxplus G_2$ and $E_6 \boxplus E_6$ are not super MF, since 

\[ \begin{array}{c c l}
P^{(1|2)}(G_2 \boxplus G_2) &=& (1,0) \cdot ((1,0) + (0,1)) = 2\cdot (1,0) + \dots   \\
                            & &                                                      \\
P^{(2|1)}(E_6 \boxplus E_6) &=& ((0,0,0,0,0,1) + (2,0,0,0,0,0))\cdot (1,0,0,0,0,0)   \\    
                            &=& 2\cdot(1,0,0,0,0,1) + \dots
\end{array} \]

Let $(G,V)$ be a saturated indecomposable super MF representation. Now we know that, if $V$ has exactly two irreducible submodules, then there are no other equivalence classes than those in Theorem~\ref{thm:proper-super}a). The next lemma (in connection with Proposition~\ref{prop:subgraph}) shows that $V$ cannot have more than two irreducible submodules. In particular, this concludes the proof of Theorem~\ref{thm:proper-super}a).

\begin{lemma} \label{lemm:ex-three}
A saturated indecomposable representation $(G, V)$ with three irreducible submodules is never super MF.
\end{lemma}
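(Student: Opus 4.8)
The plan is to combine the subgraph principle of Proposition~\ref{prop:subgraph} with the two-submodule classification already in hand, reducing to a short explicit list of candidate triples and then exhibiting a multiplicity in each. First I would pin down the combinatorial shape. Since $G$ is simple and $(G,V)$ is indecomposable, the representation graph is connected with a single middle vertex, so $G$ acts nontrivially on each irreducible submodule (a trivial summand would disconnect the graph and make $V$ decomposable under $\tilde G$). Thus the three submodules $A, B, C$ are nontrivial $G$-irreducibles and the diagram is a star with three leaves. By Proposition~\ref{prop:subgraph} each of the three two-submodule subdiagrams $A\boxplus B$, $A\boxplus C$, $B\boxplus C$ is again super MF. By pigeonhole at least two of $A, B, C$ share the same $\ZZ_2$-parity, so after renaming either two are even and one is odd, or one is even and two are odd. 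In the first case the even pair must appear in Theorem~\ref{thm:symmetric}b) and each mixed pair in Theorem~\ref{thm:proper-super}a); in the second case the odd pair must appear in Theorem~\ref{thm:skew-symmetric}c) and again each mixed pair in Theorem~\ref{thm:proper-super}a). By Corollary~\ref{cor:dual-statements} I may freely replace any submodule by its dual.

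Next I would run through this bookkeeping type by type. The crucial point is that the submodule shared between two of the pairs is heavily constrained: for a fixed simple $G$ only a handful of irreducibles occur on the relevant sides of the two-submodule lists (for type $A$ essentially $\CC^n$, $S^2\CC^n$, $\bigwedge^2\CC^n$ and the $S^k\CC^2$, and the defining and small fundamental or symmetric modules in the other types), and demanding that a common module pair simultaneously with two partners collapses the possibilities to a finite explicit list; many putative triples die at once because the required middle module never appears on the correct (even or odd) side of any entry. For the exceptional groups this step finishes immediately, since only $G_2$ and $E_6$ carry both a symmetric and a skew MF module, and even the two-submodule spaces $G_2\boxplus G_2$ and $E_6\boxplus E_6$ are already not super MF.

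Finally, for each surviving triple I would produce a non-multiplicity-free homogeneous component, the cheapest witness being the component $P^{(1,1,1)}(V)$ of degree one in each submodule. Because $A, B, C$ lie in distinct summands carrying distinct characters of the torus part of $\tilde G$, this component equals the full triple tensor product $A\otimes B\otimes C$, and within a single torus-weight the $\tilde G$-multiplicities agree with the $G$-multiplicities; hence it suffices to check that $A\otimes B\otimes C$ has a repeated $G$-constituent. This is immediate from the formulas of Section~\ref{sec:prelim}: Clebsch--Gordan for $\SL_2$, the Pieri rule together with \eqref{eqn:S2-sym}--\eqref{eqn:L2-skew} for the remaining type-$A$ modules, and Proposition~\ref{prop:univ-branching} with the modification rule~\eqref{eqn:special-mod} for types $B$, $C$, $D$. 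For example the recurring triple of copies of the defining module gives $\CC^n\otimes\CC^n\otimes\CC^n \supseteq 2\,V(2,1)$ for $n\geq 3$ and $\supseteq 2\,V(1)$ for $n=2$, so it is never super MF; the other surviving triples fail the same way.

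The hard part will be neither the reduction nor the final multiplicity computations, which are short, but the completeness of the enumeration in the middle step: one must be certain that every triple whose three pairs separately survive the two-submodule classification has been accounted for. This includes the small exceptional ranks in type $A$ (notably $n=2,4$, where $S^2\CC^2$ and $\bigwedge^2\CC^4$ behave specially) and the delicate orthogonal cases, where the modification rules can either merge or separate universal characters, exactly the subtlety already met in Lemma~\ref{lemm:proof_1}. Once the finite list is fixed, exhibiting the witnessing component $P^{(1,1,1)}$ (or, in the rare event that the triple tensor product remains multiplicity-free, a slightly larger component) is routine.
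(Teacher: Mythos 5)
Your proposal is correct and follows essentially the same route as the paper: use Proposition~\ref{prop:subgraph} together with the already-established two-submodule classifications to cut the candidate triples down to a short explicit list, then kill each survivor by exhibiting a repeated constituent in $P^{(1,1,1)}(V) = V_1\otimes V_2\otimes V_3$ (the paper does exactly this, via the explicit expansion~\eqref{eqn:three_tensor} of $\{1\}\cdot\{l\}\cdot\{k\}$ for type $A$ and a branching argument for the $\SO_{2n+1}$ triple of defining modules). The only nit is that your pigeonhole split into the $(2,1)$ and $(1,2)$ parity patterns omits the all-even and all-odd arrangements, which the lemma also covers and which the paper handles explicitly for purely odd $V$; these cases are disposed of by the same pair-reduction and tensor-product witness, or directly by the classifications in Theorems~\ref{thm:symmetric} and~\ref{thm:skew-symmetric}.
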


\begin{proof} Let $V = V_1 \oplus V_2 \oplus V_3$ with an arbitrary arrangement of these submodules to even and odd part of $V$. We claim that multiplicities occur already in $P^{(1,1,1)}(V_1\oplus V_2 \oplus V_3) = V_1 \otimes V_2 \otimes V_3$. This follows from the fact that in type $A$, for any integers $k \geq l \geq 1$, the tensor product of $\{1\}\cdot\{l\}\cdot\{k\}$ decomposes by

\begin{equation} \label{eqn:three_tensor}
\{k+l+1\} + 2\cdot \left(\sum_{i=0}^{l-1} \{k + l -i,i+1\}  \right) + \delta_{l<k}\cdot \{k,l+1\} + \left( \sum_{i=1}^{l} \{k + l -i, i ,1\} \right)
\end{equation}

and thus, is not multiplicity-free. (Here, $\delta_{l<k}$ equals $1$ if $l<k$, and $0$ otherwise. In the case that $G$ is isomorphic to $\SL_2$ the rightmost sum vanishes.) 

\bigskip

Assume for the moment that $V = V_1 \oplus V_2 \oplus V_3$ is purely odd. Since every pair of irreducible submodules must be skew MF, we can deduce that either $G$ is isomorphic to $\SL_n$, $V_1 \simeq V_2 \simeq \CC^n$ and $V_3 \simeq S^k\CC^n$, or $G = \SO_{2n+1}$ with $V_i =\CC^{2n+1}$. But $V$ fails to be skew MF in the former case by~\eqref{eqn:three_tensor}, and by a branching argument also in the latter case. \\

With a similar reasoning, this can also be proved for proper super spaces $V = V_1 \boxplus V_2 \oplus V_3$ and $V = V_1 \oplus V_2 \boxplus V_3$. 
\end{proof}

\section{Representations of groups with two simple factors} \label{sec:two}

Now we turn to the case where $(G,V)$ is a saturated indecomposable representation of a semisimple group $G$ with simple factors $G_1$, $G_2$. For the moment, let $V$ be a proper super spaces with $V_0$ and $V_1$ being irreducible. The next lemma tells us that there can be no super MF spaces related to diagrams of the form 
\[ \LOOP. \] 

\begin{lemma} \label{lemm:two_2} If $(G, V)$ is super MF, then it is impossible that both simple factors of $G$ act nontrivially on $V_0$ and $V_1$. \end{lemma}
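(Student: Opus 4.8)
The plan is to reduce the statement to a short list of candidate modules via the two classification theorems, and then to exhibit an explicit multiplicity in each. Since $V_0$ and $V_1$ are irreducible and both $G_1$ and $G_2$ act nontrivially on each, I would write $V_0 = A_0\otimes B_0$ and $V_1 = A_1\otimes B_1$ with $A_i$ (resp.\ $B_i$) nontrivial irreducible $G_1$- (resp.\ $G_2$-) modules. As super MF of $P(V)$ forces $V_0$ to be symmetric MF and $V_1$ to be skew MF, the admissible $(G_1\times G_2)$-modules are exactly those in Theorem~\ref{thm:symmetric}a) and Theorem~\ref{thm:skew-symmetric}b). Comparing the simple types of $G_1\times G_2$ appearing in both lists, I would note that no $\SO$-factor occurs among the symmetric two-factor modules, so an $\SO$-factor is incompatible with the even part; hence both factors are of type $\SL$ or $\Sp$, and the skew list forces any symplectic factor to be $\Sp_4$. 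Using Corollary~\ref{cor:dual-statements} (and geometric equivalence) to replace submodules by their duals, this leaves exactly two candidates: \textbf{(A)} $G = \SL_n\times\SL_m$ with $V_0 = V_1 = \CC^n\otimes\CC^m$, and \textbf{(D)} $G = \SL_n\times\Sp_4$ with $V_0 = V_1 = \CC^n\otimes\CC^4$.

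For (A) I would compute $P^{(2|1)}$ from $(\GL_n,\GL_m)$-duality~\eqref{eqn:GLnGLm_duality} and~\eqref{eqn:GLnGLm_skew_duality}. Abbreviating $V(\lambda\,|\,\nu) := V(\lambda)^{(n)}\otimes V(\nu)^{(m)}$, one has $S^2 V_0 = V(2\,|\,2)\oplus V(1^2\,|\,1^2)$ and $\bigwedge^1 V_1 = V(1\,|\,1)$, so by the Pieri rule
\[ P^{(2|1)} = \big(V(2\,|\,2)\oplus V(1^2\,|\,1^2)\big)\otimes V(1\,|\,1). \]
The summand $V(2,1\,|\,2,1)$ then occurs once in $V(2\,|\,2)\otimes V(1\,|\,1)$ and once in $V(1^2\,|\,1^2)\otimes V(1\,|\,1)$, i.e.\ with multiplicity two, contradicting Definition~\ref{def:superMF}. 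When one of the ranks equals $2$ the partitions degenerate, but the same computation still produces a multiplicity (a doubled $V(1\,|\,2,1)$ if exactly one rank is $2$, or $V(1\,|\,1)$ if both are), so (A) always fails.

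For (D) I would expand $S^2 V_0$ and $\bigwedge^1 V_1$ via the same dualities together with the $\SL_4\downarrow\Sp_4$ branching of Proposition~\ref{prop:univ-branching}b), giving $S^2(\CC^n\otimes\CC^4) = V(2)^{(n)}\otimes\langle 2\rangle \oplus V(1^2)^{(n)}\otimes\big(\langle 1^2\rangle\oplus\langle 0\rangle\big)$ and $\bigwedge^1 V_1 = V(1)^{(n)}\otimes\langle 1\rangle$. Tensoring and using $\langle 2\rangle\otimes\langle 1\rangle = \langle 3\rangle\oplus\langle 2,1\rangle\oplus\langle 1\rangle$, $\langle 1^2\rangle\otimes\langle 1\rangle = \langle 2,1\rangle\oplus\langle 1\rangle$ and $\langle 0\rangle\otimes\langle 1\rangle = \langle 1\rangle$, I would find that $V(2,1)^{(n)}\otimes\langle 1\rangle$ occurs three times in $P^{(2|1)}$ (and $V(1)^{(n)}\otimes\langle 1\rangle$ three times when $n = 2$). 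Hence (D) also violates Definition~\ref{def:superMF}, which completes the proof.

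The main difficulty I anticipate is not any individual plethysm but making the reduction airtight: one must verify that the intersection of the two classification lists really consists only of (A) and (D)---in particular that every admissible $\Sp$- and $\SO$-factor is ruled out by the symmetric constraint on $V_0$---and one must control the low-rank degenerations of the partitions (notably for $\SL_2$, where $V(1^3) = 0$ and $V(2,1) = V(1)$) so that the claimed multiplicity survives in every case.
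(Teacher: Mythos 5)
Your explicit computations for the two candidates (A) and (D) are correct, including the low-rank degenerations, but the reduction step has a genuine gap, and it sits exactly where you anticipated trouble: the intersection of the two classification lists is strictly larger than (A) and (D). Matching the \emph{types} of the simple factors appearing in Theorem~\ref{thm:symmetric}a) and Theorem~\ref{thm:skew-symmetric}b) is not legitimate, because of the exceptional isomorphisms: an ``orthogonal factor'' in the skew list can be the same group as a linear or symplectic factor in the symmetric list, acting through a non-defining representation. Concretely, since $\SO_5$ is covered by $\Sp_4$ (with defining representation $\bigwedge^2_0\CC^4$), the pair $G = \SL_k\times\Sp_4$ ($k=2,3$) with $V_0 = \CC^k\otimes\CC^4$ and $V_1 = \CC^k\otimes\bigwedge^2_0\CC^4$ has a symmetric-MF even part and a skew-MF odd part (the latter being $\SL_k\otimes\SO_5$ from Theorem~\ref{thm:skew-symmetric}b)), and both simple factors act nontrivially on both parts; yet it is not geometrically equivalent to (A) or (D). The same happens with $\SO_6$ covered by $\SL_4$, giving $G = \SL_2\times\SL_4$, $V_0 = \CC^2\otimes\CC^4$, $V_1 = \CC^2\otimes\bigwedge^2\CC^4$, and with $\SO_3$ covered by $\SL_2$, giving $G = \SL_k\times\SL_2$, $V_0 = \CC^k\otimes\CC^2$, $V_1 = \CC^k\otimes S^2\CC^2$. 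Your argument never rules these out, and they are not handled anywhere else in the paper: Section~\ref{sec:two} invokes this very lemma to dispose of all configurations in which both factors act on both parts, and its tables only treat modules where some factor acts trivially on one of the two parts. So, as written, the proof does not establish the lemma.

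This is also the structural difference from the paper's own argument, which avoids the classification lists entirely and is therefore immune to this pitfall: writing $V_i = W_i^{(1)}\otimes W_i^{(2)}$, the paper first exhibits a multiplicity for $(\SL_n\times\SL_m,\ \CC^n\otimes\CC^m\boxplus\CC^n\otimes\CC^m)$ via $(\GL_n,\GL_m)$-duality (in degree $(3|3)$, resp.\ $(2|2)$ when $n=2$), and then observes that an arbitrary pair factors through the plethysms $G_1\times G_2\to\GL(W_i^{(1)})\times\GL(W_i^{(2)})\to\GL(W_i^{(1)}\otimes W_i^{(2)})$, so that the multiplicities produced at the $\GL$-level persist no matter which irreducible modules the $W_i^{(j)}$ happen to be. To salvage your route you would have to redo the reduction up to geometric equivalence rather than up to the type of the simple factors --- adding at least the three families above, each of which must then be killed by an explicit low-degree computation --- or else replace the reduction by a uniform argument along the paper's lines.
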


\begin{proof}
We consider the pair $(G,V) = (G_1 \times G_2, V_0 \boxplus V_1)$ with $V_i = W^{(1)}_i \otimes W^{(2)}_i$. For the moment we assume that $(G,V) = (\SL_n\times \SL_m, \CC^n\otimes \CC^m \boxplus \CC^n\otimes \CC^m)$. Then, the homogeneous components of $P(V)$ are given by

\begin{equation} \label{eqn:local}
P^{(k|l)} = \bigoplus_{\begin{minipage}{90pt}\tiny $|\lambda|=k,~\ell(\lambda)\leq min\{n,m\} \\ |\mu| = l,~ \ell(\mu) \leq n,~ \mu_1\leq m$ \end{minipage}} \left( V_\lambda^n \otimes V_\mu^n \right) \otimes \left( V_\lambda^m \otimes V_{\mu^t}^m \right).
\end{equation}

If $n,m \geq 3$ consider $k = l = 3$ and $\lambda = \mu = (2,1)$. Then, $V^n_{(3,2,1)}$ has multiplicity two inside $V^n_{(2,1)} \otimes V^n_{(2,1)}$. If $n = 2$ consider $k = l = 2$. So, we get terms that are labelled by $\lambda, \mu \in \{ (2), (1^2) \}$ and both, $V^2(2) \otimes V^2(2)$ and $V^2(1^2) \otimes V^2(1^2) \simeq \CC$ contain the trivial representation $\CC$. Now, if $G_i$ are arbitrary, we get plethysms

\begin{equation} \label{eqn:plethysm}
G_1 \times G_2 \rightarrow \GL(W^{(1)}_i) \times \GL(W^{(2)}_i) \rightarrow \GL(W^{(1)}_i \otimes W^{(2)}_i). 
\end{equation}

Therefore, the multiplicities of the $G_1 \times G_2$ module $P(V_0 \boxplus V_1)$ are governed by~\eqref{eqn:local} and hence they are always $\geq 2$.
\end{proof}

We keep the above assumptions on $V$ and assume that $(G,V)$ is saturated indecomposable super MF. By Lemma~\ref{lemm:two_2}, there is one part $X$ of the super vector space on which both simple factors of $G$ act nontrivially, while on the other part $Y$ there is exactly one simple factor acting nontrivially. The respective representation diagrams look like \[ \superIV{}{}{}{} {\rm ~ and~} \!\!\!\superAI{}{}{}{}. \]

Let us consider the case $X = V_1$ and $Y = V_0$ first. Below we list all combinations $V = V_0 \boxplus V_1$ with $V_0$ being a representation of a simple group from Theorem~\ref{thm:symmetric}a) and $V_1 = W^{(1)}_1 \otimes W^{(2)}_1$ being a representation of a group with two simple factors from Theorem~\ref{thm:skew-symmetric}b), such that $V$ is not contained in Theorem~\ref{thm:proper-super}b). \bigskip

Note that by Proposition~\ref{prop:subgraph}, we can avoid calculations for all $V$ such that $V_0 \boxplus W^{(i)}_1$ is not super MF. By this reasoning, we can omit computations for the following modules:

$\bigwedge^2\SL_n \boxplus \SL_n \otimes \SL_m~(n \geq 4, m \geq 2)$, 
$S^2\SL_2 \boxplus \SL_k\otimes S^2\SL_2~(k = 2,3)$, 
$\bigwedge^2\SL_n \boxplus \SL_n \otimes \Sp_4~(n\geq 4)$, 
$\Sp_4 \boxplus \SL_n \otimes \Sp_4$,
$\bigwedge^2_0 \Sp_4 \boxplus \SL_n \otimes \Sp_4$,
$\SO_{2n} \boxplus \SL_2\otimes \SO_{2n}$, 
$\Delta_{8}^+ \boxplus \SL_2 \otimes \SO_8$,
$\Delta_{10}^+ \boxplus \SL_2\otimes \SO_{10}$,
$\bigwedge^2 \SL_3 \boxplus \SL_3 \otimes \SO_{2n+1}$,
$\SO_{2n+1} \boxplus \SL_k \otimes \SO_{2n+1}$, 
$\Delta_7 \boxplus \SL_k \otimes \SO_7$ and 
$\Delta_9 \boxplus \SL_k \otimes \SO_9$. In all these cases, $k$ equals $2$ or $3$. \bigskip

For the remaining modules, we compute in each case a graded subspace $P^{(i|j)}(V)$ of $P(V)$ that contains multiplicities.

\[ \begin{array}{|l l l|} \hline
{\rm Representation }                                 & (i|j)     & {\rm Multiplicity}                     \\ \hline
S^2\SL_n\boxplus \SL_n\otimes \SL_m                   & (5|2)     & 2 \cdot \{8,4\}\otimes\{1^2\}          \\ 
\SL_2 \boxplus \SL_k \otimes S^2\SL_2~(k = 2,3)       & (2|3)     & 2 \cdot \{3-k\}\otimes\{4\}            \\
\SL_4 \boxplus \SL_2 \otimes \bigwedge^2\SL_4         & (1|4)     & 2 \cdot \{2\}\otimes\{2^2,1\}          \\
\SL_n \boxplus \SL_n \otimes \Sp_4                    & (1|3)     & 2 \cdot \{3,1\}\otimes \left<1\right>  \\
\SL_2 \boxplus \SL_2\otimes \SO_{2n}                  & (2|n+1)   & 2 \cdot \{n+1\}\otimes[1^{n-1}]        \\
\SL_3 \boxplus \SL_3 \otimes \SO_{2n+1}               & (1|3)     & 2 \cdot \{1\}\otimes[1]                \\   
\Delta_5 \boxplus \SL_3\otimes \SO_5                  & (2|3)     & 2 \cdot \{0\}\otimes (1,2)             \\ 
\Delta_5 \boxplus \SL_2\otimes \SO_5                  & (2|3)     & 2 \cdot \{1\}\otimes (0,2)             \\ \hline
\end{array}
\]

Since the first representation in this list is not super MF, one can show that also $S^2\SL_k \boxplus \SL_k\otimes \SO_m$ and $S^2\SL_n \boxplus \SL_n\otimes \Sp_4$ are not super MF. This follows from an argument similar to that in~\eqref{eqn:plethysm}. \bigskip

Now we interchange the role of even and odd part, i.e. $X = V_0$ and $Y = V_1$. In this case we only have to consider the groups $\SL_n \times \SL_m$ and $\SL_n \times \Sp_4$.

\[ \begin{array}{|l l l|} \hline
{\rm Representation }                                                 & (i|j) & {\rm Multiplicity}                  \\ \hline
\SL_n\otimes \SL_m \boxplus \bigwedge^2\SL_m~ (n \geq 2,~ m \geq 4)   & (3|2) & 2\cdot\{2,1\}\otimes\{3,2,1^2\}     \\ 
\SL_n \otimes \Sp_{2m} \boxplus \SL_n~ (n>3,m = 2~ {\rm or} ~n=3,m>2) & (3|1) & 2\cdot\{2,1\}\otimes\left<1\right>  \\ 
\SL_n\otimes \Sp_4 \boxplus  \bigwedge^2_0 \Sp_4~ (n \geq 2)          & (3|2) & 2\cdot\{2,1\}\otimes\left<1^2\right>\\ \hline
\end{array}
\]

Here, we could omit calculations for the modules
$\SL_n \otimes \SL_2 \boxplus S^p\SL_2~(p = 3,\dots,6)$,
$\SL_m \otimes \SL_n \boxplus S^2\SL_m~(m \geq 3, n \geq 2)$,
$\SL_n \otimes \SL_3 \boxplus S^3\SL_3$,
$\SL_n \otimes \SL_6 \boxplus \bigwedge^3\SL_6$,
$\SL_n \otimes \Sp_4 \boxplus S^2\SL_n$ $(n \geq 3)$,
$\SL_m \otimes \Sp_4 \boxplus \bigwedge^2\SL_m$,
$\SL_n\otimes \Sp_{2m} \boxplus \Sp_{2m}$ $(n > 3, m = 2$ or $n \leq 3, m > 2)$ and
$\SL_k\otimes \Sp_6 \boxplus \bigwedge^3_0 \Sp_6$ $(k = 2,3)$.

\bigskip

Now we turn to super vector spaces that decompose into three irreducible submodules for $G$ so in particular, either $V_0$ or $V_1$ is irreducible. The possible representation diagrams are given in the lemma below. We show that $(G,V)$ is never super MF in these cases. 


\begin{lemma} \label{lemm:SL-diagrams}
Representations of the following type are never super MF:

i) $\superIIV{}{}{}{}{}$ \hfill ii) $\superAII{}{}{}{}{}$ \hfill iii) $\superVV{}{}{}{}{}$ \hfill iv) $\superAA{}{}{}{}$

v) $\superIN{}{}{}{}{}$ \hfill vi) $\superNI{}{}{}{}$ \hfill    vii) $\superIIN{}{}{}{}{}$ \hfill vii) $\superNII{}{}{}{}{}$ 
\end{lemma}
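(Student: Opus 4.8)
The plan is to adapt the argument of Lemma~\ref{lemm:ex-three} to the two-factor setting: for each of the eight diagrams I will produce a single homogeneous component $P^{(\mathbf{i}|\mathbf{j})}(V)$ that already fails to be multiplicity-free, using the subgraph principle of Proposition~\ref{prop:subgraph} to discard the easy cases and the triple-product decomposition~\eqref{eqn:three_tensor} to treat the genuinely three-fold ones. Throughout, since the action is saturated, it suffices to exhibit a repeated irreducible $\tilde{G}$-summand inside one such component.

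First I would run the subgraph reduction. For every diagram that contains a two-submodule subdiagram already recorded as not super MF---either one of the configurations ruled out by Lemma~\ref{lemm:two_2}, or one of the entries in the multiplicity tables of this section---Proposition~\ref{prop:subgraph} immediately shows the ambient three-submodule diagram is not super MF. This disposes of all shapes in which deleting one vertex leaves a forbidden pair, and in particular of those diagrams whose two outer submodules are both attached to a common factor by tensor constructions.

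It remains to handle the diagrams all of whose two-submodule subdiagrams are super MF. When one simple factor, necessarily of type $A$, acts on all three submodules through symmetric powers of its defining representation, I would compute $P^{(1,1,1)}(V)=V_1\otimes V_2\otimes V_3$ and read off the coefficient $2$ in the middle sum of~\eqref{eqn:three_tensor}, exactly as in the proof of Lemma~\ref{lemm:ex-three}; the second factor, acting by a single irreducible on each tensorand, cannot separate the two copies. In the remaining chain-type diagrams, where neither factor alone meets all three submodules, the lowest component is multiplicity-free and one must pass to a slightly higher component: here I would expand the relevant tensor power by~\eqref{eqn:GLnGLm_duality} or~\eqref{eqn:GLnGLm_skew_duality} and apply the Pieri rule to the single-factor submodule, which produces two equal summands once the tensor submodule is taken in degree two.

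The main obstacle will be exactly these chain-type configurations, for which no low component detects the failure, so that the choice of the component and the bookkeeping of the resulting tensor and branching multiplicities become delicate; when the shared factor is orthogonal or symplectic I would transfer the type-$A$ computation through the branching rules of Proposition~\ref{prop:univ-branching}, tracking modification rules as in Lemma~\ref{lemm:proof_1}, or else invoke Corollary~\ref{cor:dual-statements} to return to a $\GL$-computation. A final degenerate subtlety occurs when the common factor is $\SL_2$, where the rightmost sum of~\eqref{eqn:three_tensor} disappears and a direct check of the surviving multiplicity is required.
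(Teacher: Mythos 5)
Your proposal is sound, and for the cases that form the real content of the lemma it lands on exactly the paper's computation: take the tensor module $T$ in degree two, expand $P^2T$ by \eqref{eqn:GLnGLm_duality} resp.\ \eqref{eqn:GLnGLm_skew_duality}, and apply the Pieri rule to the single-factor modules to exhibit $\{k+1,1\}\otimes\{l+1,1\}$ with multiplicity two inside $P^1U\otimes P^1W\otimes P^2T$. The difference is organizational. The paper proves the whole lemma with this one component: its key observation is that the decomposition of $P^1U\otimes P^1W\otimes P^2T$ depends only on the parity of $T$, never on the parities of $U$ and $W$, so all eight diagrams (including those where both single modules are attached to the same simple factor) collapse to essentially two Pieri computations, and non-$\SL$ factors are then dismissed by a short branching argument; neither Proposition~\ref{prop:subgraph} nor the triple product \eqref{eqn:three_tensor} enters its proof. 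You instead run a three-way split: pruning by Proposition~\ref{prop:subgraph} (legitimate here, since the two-submodule classification of this section is complete at this point, and it is this pruning that forces every surviving module to be a defining representation or $S^2\CC^2$ and $T$ to be a product of defining representations), the degree-$(1,1,1)$ component with \eqref{eqn:three_tensor} for the diagrams in which one factor meets all three submodules (a correct import of the method of Lemma~\ref{lemm:ex-three}: the other factor acts on the component by a single irreducible and cannot split the multiplicity), and the degree-$(1,1,2)$ computation for the chains. Your route costs more case analysis but renders the final branching step for orthogonal and symplectic factors nearly superfluous; the paper's route is shorter but leaves its ``two completely analogous calculations'' and the branching argument to the reader. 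One correction: Corollary~\ref{cor:dual-statements} cannot ``return you to a $\GL$-computation''---replacing a submodule by its dual never changes the acting group---so in that fallback branch you must argue via branching/plethysm as in \eqref{eqn:plethysm} and Proposition~\ref{prop:univ-branching}, which is precisely the paper's closing remark; since your pruning already eliminates those configurations, this slip does not damage the proof.
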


\begin{proof} Assume for the moment that $G = \SL_n \times \SL_m$. Let $U = S^k$ and $W = S^l$ be the defining representations (or its symmetric square) of $\SL_n$ or $\SL_m$ and denote by $V = U \oplus W \oplus \CC^n\otimes \CC^m$ a super vector space without deciding yet to which part the direct summands belong. Observe that for fixed choice of $U$ and $W$ the decomposition of 

\[ P^1 U \otimes P^1 W \otimes P^2 (\CC^n\otimes\CC^m) \]

depends only on the parity of $\CC^n \otimes \CC^m$ in $V$, but not on those of $U$ and $W$. For example, fix $U = S^k\CC^n$, $W = S^l\CC^m$ (with $k,l = 1,2$), then it follows that representations of type 1a) and 1b) are not super MF since
$P^{(1,1|2)}(U \oplus W \boxplus \CC^n\otimes \CC^m)$ (resp. $P^{(1|1,2)}(U \boxplus W \oplus \CC^n\otimes \CC^m)$) is given by

\[ \{k\}\otimes \{l\} \cdot (\{2\}\otimes\{1^2\} + \{1^2\}\otimes\{2\}) \]
and contains $\{k+1,1\}\otimes\{l+1,1\}$ with multiplicity two. The remaining cases can be ruled out by two completely analogous calculations. \bigskip

It follows by a branching argument that also for any other group $G$ the diagrams {\it i)} - {\it vii)} do not give rise for a super MF representation. \end{proof}


Suppose we are given an indecomposable saturated action $(G,V)$ with $G = G_1 \times G_2$ and $V$ consisting of three irreducible submodules or more. Its representation diagram has either a subdiagram which equals one of those in the above lemma, or any that describes the action of $G$ on a purely even (or purely odd) super vector space with three irreducible submodules. In both cases, $(G,V)$ fails to be super MF and thus the proof of Theorem~\ref{thm:proper-super}b) is complete. \bigskip

\section{Representations of groups more than two simple factors} \label{sec:three}

Finally, let $G = G_1 \times G_2 \times G_3$ be a group consisting of three simple factors. As above, we assume $(G,V)$ to be saturated indecomposable. Since the representation diagrams we are now dealing with are pretty big (they all consist of at least $4$ vertices) and since part a) and b) of Theorem~\ref{thm:proper-super} is now completely proven, we can heavily rely on Proposition~\ref{prop:subgraph} in the following.

\begin{lemma} \label{lemm:4.21}
The representations listed in Theorem~\ref{thm:proper-super}c) are the only super MF spaces of type
\[ \superN{}{}{}{}{}. \]
\end{lemma}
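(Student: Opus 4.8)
The diagram in question describes the action of $G = G_1 \times G_2 \times G_3$ on a super vector space whose representation graph is a ``path'' on four vertices: two outer irreducible submodules, one attached to $G_1$ alone and the other to $G_3$ alone, joined through a central tensor-product module acted on by $G_1 \times G_2$ and $G_2 \times G_3$ respectively (the middle factor $G_2$ being the linking vertex). The goal is to show that among all such shapes, only the four entries of Theorem~\ref{thm:proper-super}c) are super MF. My strategy is to exploit Proposition~\ref{prop:subgraph} as aggressively as possible: every connected subdiagram of a super MF diagram must again be super MF, so the classification of the two-factor case (Theorem~\ref{thm:proper-super}b), now fully established) together with Lemma~\ref{lemm:big-mama} and Lemma~\ref{lemm:SL-diagrams} severely constrains which pieces can appear.

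\emph{Step 1: pin down the central piece.} The two tensor-product submodules sharing the factor $G_2$ form a subdiagram of the form $\SL_n\otimes\SL_p \boxplus \SL_p\otimes\SL_m$ (or its variants with $\SO$, $\Sp$). By Lemma~\ref{lemm:big-mama} this forces the linking group $G_2$ to be $\SL_2$: any $p \geq 3$ already produces a multiplicity in $P^{(3|3)}$. Thus $G_2 \simeq \SL_2$ and the two central modules are $W^{(1)}\otimes\CC^2$ and $\CC^2\otimes W^{(3)}$ with $W^{(i)}$ the defining-type representation of $G_i$.

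\emph{Step 2: pin down the outer factors.} Each outer pair consisting of a tensor module together with the remaining leg attached to $G_1$ (resp. $G_3$) is a two-factor subdiagram, hence must appear in Theorem~\ref{thm:proper-super}b), and likewise the pair $G_i \boxplus G_i\otimes\SL_2$ restricts to a super MF two-factor space. Reading off the admissible entries of part b) (together with the branching Proposition~\ref{prop:univ-branching} to handle the $\SO$ and $\Sp$ cases), the only possibilities for $G_1$ and $G_3$ are $\SL_n$, $\Sp_{2n}$, or $\SO_{2m+1}$ in the configurations that appear in Theorem~\ref{thm:proper-super}c); all other root types or tensor-module choices are eliminated because a proper subdiagram already fails to be super MF. This narrows the candidate diagrams to exactly the four listed, plus a short finite list of ``boundary'' cases to be excluded by hand.

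\emph{Step 3: verify sufficiency and kill the survivors.} Sufficiency of the four listed representations is already supplied: representation~1) was handled in Lemma~\ref{lemm:big-mama} ($p=2$ case), and representations~2), 3), 4) are exactly the content of Lemma~\ref{lemm:proof_1}. It remains to discard the finitely many configurations that survive Steps~1--2 but are not on the list; for each such diagram I would, as in Section~\ref{sec:two}, exhibit a single small bigraded component $P^{(i|j)}(V)$ carrying a multiplicity, typically by combining the plethysm formulas \eqref{eqn:GLnGLm_duality}--\eqref{eqn:sp_good-decomposition} with the three-fold tensor decomposition \eqref{eqn:three_tensor}. \textbf{The main obstacle} I anticipate is organizational rather than computational: ensuring the case analysis in Steps~1--2 is genuinely exhaustive, since one must check that every way of distributing the four submodules between the even and odd parts of $V$, and every allowed substitution of an outer module by its symmetric square or of a classical group by another of the same rank, is covered. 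Corollary~\ref{cor:dual-statements} reduces the duality-exchange cases, and Proposition~\ref{prop:subgraph} collapses most of the remaining branches to previously treated subdiagrams, so the genuinely new computations should be few; the care lies in confirming no admissible combination slips through the subdiagram filter.
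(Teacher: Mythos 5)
Your proposal follows essentially the same route as the paper's proof: Proposition~\ref{prop:subgraph} together with the now-complete Theorem~\ref{thm:proper-super}b) forces the two-factor subdiagrams to be representations \emph{1)}, \emph{2)}, \emph{3)} or \emph{5)} of that list, Lemma~\ref{lemm:big-mama} pins the linking factor down to $\SL_2$, and sufficiency is inherited from Lemma~\ref{lemm:big-mama} (the $p=2$ case) and Lemma~\ref{lemm:proof_1}. The only difference is that your anticipated residual list of ``boundary'' cases is in fact empty: the subdiagram constraints combined with Lemma~\ref{lemm:big-mama} already force exactly the four representations of Theorem~\ref{thm:proper-super}c), so no further multiplicity computations are needed.
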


\begin{proof}
Let $V$ be a super MF space of the above type. Its representation diagram must contain subdiagrams corresponding to representations {\it 1)}, {\it 2)}, {\it 3)} or {\it 5)} in Theorem~\ref{thm:proper-super}b). By Lemma~\ref{lemm:big-mama}, 
the group factor that acts nontrivially on both parts of $V$ must be isomorphic to $\SL_2$. This forces $V$ to be equivalent to either representation {\it 1)} - {\it 4)} of Theorem~\ref{thm:proper-super}c). \end{proof}

In the case of purely odd super spaces, we get an instance of an irreducible super MF representation for a group with three simple factors. This is given by $\SL_2\otimes \SO_4$ and we show that this is the only example of a super MF space where more than two simple factors act nontrivially on a single irreducible sumand.

\begin{lemma} \label{lemm:4.23}
Let $\mathcal{G}$ be a connected representation diagram that contains 
\[ \skewIII{\SL_2}{\SL_2}{\SL_2}{} \]
as a proper subgraph. Then the action described by $\mathcal{G}$ is never super MF.
\end{lemma}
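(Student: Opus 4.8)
The plan is to exploit Proposition~\ref{prop:subgraph} to reduce the statement to a finite list of small irreducible representations on which the three $\SL_2$ factors act, and then to exhibit an explicit multiplicity in each. Since $\mathcal{G}$ properly contains the diagram of $\SL_2\otimes\SL_2\otimes\SL_2$ acting on $\CC^2\otimes\CC^2\otimes\CC^2$ as a subgraph, either (a) there is an additional vertex (an additional irreducible submodule attached to one of the three $\SL_2$ factors, or a fourth simple factor attached to the triple-tensor module), or (b) one of the three factors is enlarged so that the single irreducible summand is a representation larger than $\CC^2\otimes\CC^2\otimes\CC^2$ on which three factors act nontrivially. By Proposition~\ref{prop:subgraph} it suffices to rule out the minimal such extensions, since any larger diagram contains one of these as a subgraph.

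First I would treat case (b), the enlargement of a factor. By Theorem~\ref{thm:skew-symmetric}b), the only irreducible modules on which at least two simple factors act nontrivially are of the form $\SL_n\otimes\SL_m$, $\SL_n\otimes\Sp_4$, $\SL_k\otimes\SO_{2m+1}$ or $\SL_2\otimes\SO_{2n}$. A module on which \emph{three} simple factors act nontrivially must therefore arise from one of these by splitting off a further tensor factor; the relevant possibilities are $\SL_2\otimes\SL_2\otimes\SL_n$ (viewing $\CC^4=\CC^2\otimes\CC^2$ as the defining module of an $\SO_4$ that has degenerated into $\SL_2\times\SL_2$) and its analogues. The key point is that $\SO_4\cong(\SL_2\times\SL_2)/\{\pm1\}$, so $\SL_2\otimes\SO_4$ is precisely $\SL_2\otimes\SL_2\otimes\SL_2$ on $\CC^2\otimes\CC^2\otimes\CC^2$, and any strictly larger factor (replacing one $\CC^2$ by $\CC^n$ with $n\geq3$, or by $S^2\CC^2$, or replacing $\SO_4$ by $\SO_m$ with $m\geq5$) yields a module that fails the skew MF condition already at the level of one pair of factors, or produces a multiplicity in a low-degree exterior power.

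Next I would treat case (a), an extra submodule or a fourth factor. If there is an additional irreducible summand $U$, then $\mathcal{G}$ contains a three-submodule subdiagram, and by Lemma~\ref{lemm:ex-three} (for a single simple factor) or the arguments of Section~\ref{sec:two} and Lemma~\ref{lemm:SL-diagrams} (for diagrams with extra submodules attached to the $\SL_2$'s) such configurations are never super MF; alternatively a direct computation in $P^{(1,1,1)}$ using~\eqref{eqn:three_tensor} produces a multiplicity. If instead a fourth simple factor is attached to the triple-tensor module, then by Lemma~\ref{lemm:big-mama} (or its reasoning on plethysms as in~\eqref{eqn:plethysm}) the resulting module already fails to be super MF. The concrete verification I expect to do is to write down the decomposition of a low-degree homogeneous component of the supersymmetric algebra for each minimal extension and point to a term occurring with multiplicity at least two, using the $(\GL,\GL)$ dualities~\eqref{eqn:GLnGLm_duality} and~\eqref{eqn:GLnGLm_skew_duality} together with the branching rules of Proposition~\ref{prop:univ-branching}.

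The main obstacle will be organizing case (b) cleanly: the isogeny $\SO_4\sim\SL_2\times\SL_2$ means one must be careful to distinguish genuine enlargements of a factor from a mere relabeling of $\SL_2\otimes\SO_4$ itself (which \emph{is} super MF and is excluded by the word ``proper'' in the hypothesis). The cleanest route is to observe that any proper supergraph either enlarges a factor beyond the $\SL_2\otimes\SO_4$ case—whence one of the three pairwise restrictions violates Theorem~\ref{thm:skew-symmetric}, contradicting Proposition~\ref{prop:subgraph}—or adds a vertex, whence Lemma~\ref{lemm:ex-three}, Lemma~\ref{lemm:big-mama}, or~\eqref{eqn:three_tensor} applies. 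In every case the conclusion follows from Proposition~\ref{prop:subgraph} because super MF is inherited by subgraphs, so a single failing subdiagram suffices.
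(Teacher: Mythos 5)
There is a genuine gap, and it sits exactly at the heart of the lemma. Your strategy is to reduce, via Proposition~\ref{prop:subgraph}, every minimal extension to some previously established failure — but for the central case (an extra module vertex attached to one of the three $\SL_2$'s) no such reduction exists, because \emph{every proper subdiagram of the minimal configurations is super MF}. Take $V = \CC^2 \boxplus \CC^2\otimes\CC^2\otimes\CC^2$ or $V = S^k\CC^2\oplus\CC^2\otimes\CC^2\otimes\CC^2$ with $k\leq 2$: deleting the extra vertex leaves $\SL_2\otimes\SO_4$, which \emph{is} skew MF (Theorem~\ref{thm:skew-symmetric}b); deleting one outer $\SL_2$ leaves $\CC^2\oplus\CC^2\otimes\CC^2$ or $S^2\CC^2\oplus\CC^2\otimes\CC^2$, which are skew MF (Theorem~\ref{thm:skew-symmetric}c), and similarly for the proper-super versions (Theorem~\ref{thm:proper-super}b); restricting to one factor gives harmless modules like $\CC^2\oplus\CC^2$. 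So Proposition~\ref{prop:subgraph} cannot finish the job, and none of the results you invoke applies to the configuration at hand: Lemma~\ref{lemm:ex-three} requires $G$ simple and three irreducible summands; Lemma~\ref{lemm:SL-diagrams} concerns two simple factors with three module vertices; Lemma~\ref{lemm:big-mama} concerns two modules each shared by two group factors; and~\eqref{eqn:three_tensor} is about three modules for a single $\SL_n$. Here there are only \emph{two} modules, so your proposed component $P^{(1,1,1)}$ does not even exist, and the lowest bidegree $P^{(1|1)} = S^k\CC^2\otimes(\CC^2\otimes\CC^2\otimes\CC^2)$ is in fact multiplicity-free, so no ``obvious'' low-degree term does the work.

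What is missing is the explicit computation that constitutes the paper's proof: the multiplicity first appears in bidegree $(1|3)$ and is a genuinely three-factor phenomenon. Using $(\GL_2,\GL_4)$ skew duality~\eqref{eqn:GLnGLm_skew_duality} and restricting $\GL_4\downarrow\SO_4\cong\SL_2\times\SL_2$ one finds
\[ \bigwedge\nolimits^3(\CC^2\otimes\CC^2\otimes\CC^2) = \{1\}\otimes\{1\}\otimes\{1\} + \{3\}\otimes\{1\}\otimes\{1\} + \{1\}\otimes\{3\}\otimes\{1\} + \{1\}\otimes\{1\}\otimes\{3\}, \]
and since both $\{k\}\cdot\{1\}$ and $\{k\}\cdot\{3\}$ contain a common constituent (e.g. $\{k+1\}$ for every $k\geq 1$), the component $P^1(S^k\CC^2)\otimes\bigwedge^3(\CC^2\otimes\CC^2\otimes\CC^2)$ contains $\{k+1\}\otimes\{1\}\otimes\{1\}$ with multiplicity two. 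This one computation settles all minimal extensions simultaneously, since $P^1(U)=U$ regardless of whether the extra summand is even or odd. A smaller point: your case (b), enlarging a group factor or the tensor factor $\CC^2$ to $\CC^n$ or $S^2\CC^2$, lies outside the hypothesis altogether — such a change alters the vertex and module labels, so the resulting diagram no longer contains the given labeled diagram as a subgraph; as you half-note yourself, the only configuration of that shape one must worry about is $\SL_2\otimes\SO_4$ itself, which is excluded by the word ``proper''.
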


\begin{proof} By assumption there is a subdiagram $\mathcal{G}'$ (with propably $\mathcal{G}' = \mathcal{G}$) that 
corresponds to either a purely odd super vector space $V = S^k\SL_2 \oplus \SL_2\otimes\SL_2\otimes \SL_2$ or to $V' = S^k\SL_2 \boxplus \SL_2\otimes\SL_2\otimes \SL_2$, where $1 \leq k \leq 4$. By restricting from $\GL_4$ to $\SO_4$ we can deduce from $(\GL_n,\GL_m)$ skew duality~\eqref{eqn:GLnGLm_skew_duality} the decompositon

\[ \bigwedge^3(\SL_2\otimes\SL_2\otimes\SL_2) = \{1\}\otimes\{1\}\otimes\{1\} + \{1\}\otimes\{1\}\otimes\{3\} + \{1\}\otimes\{3\}\otimes\{1\} + \{3\}\otimes\{1\}\otimes\{1\}. \]

It follows that in both cases there are multiplicities in $P^{(1|3)}(V')$, resp. $P^{|1,3)}(V)$ since

\[ P^1(S^k\SL_2) \otimes P^3(\SL_2^{\otimes3}) = \{k\}\!\otimes\!\{0\}\!\otimes\!\{0\} \cdot (\{1\}\!\otimes\!\{1\}\!\otimes\!\{1\} +\{3\}\!\otimes\!\{1\}\!\otimes\!\{1\} + \dots) \]

contains $\{k-1\}\!\otimes\!\{1 \}\!\otimes\!\{1\}$ with multiplicity $2$. \end{proof}

Now let $G = G_1 \times \dots \times G_k$ be a group with $k \geq 3$ simple factors and consider a saturated indecomposable representation $(G,V)$. We conclude that Theorem~\ref{thm:proper-super}c) is now proved since for the representation diagram of $(G,V)$ one of the following is true:

\begin{enumerate}

\item it describes the action of $G$ on a purely even (resp. odd) super vector space;

\item or it has a form as in Lemma~\ref{lemm:4.21} (if $k = 3$ and $V$ has exactly two irreducible submodules);

\item or it contains a connected subdiagram with two group vertices and three representation vertices (if $V$ has more than two irreducible submodules);

\item or there is an irreducible submodules on which at least $3$ simple factors act nontrivially. 

\end{enumerate}

In the first case, if $(G,V)$ is super MF, it is geometrically equivalent to one of the representations {\it 1)} - {\it 3)} from Theorem~\ref{thm:proper-super}c), while in the second case it is geometrically equivalent to one of {\it 4)} - {\it 10)}. In cases three and four $(G,V)$ can obviously not be super MF.

\end{document}